\newtheorem{theorem}{Theorem}[section]
\newtheorem{corollary}[theorem]{Corollary}
\newtheorem{lemma}[theorem]{Lemma}
\newtheorem{proposition}[theorem]{Proposition}
\theoremstyle{definition}
\newtheorem{example}[theorem]{Example}
\newtheorem{remark}[theorem]{Remark}
\numberwithin{equation}{section}
\title[First-order approximations of strong vector equilibria]
{First-order approximation of strong vector equilibria with application
to nondifferentiable constrained optimization}
\author[A. Uderzo]{Amos Uderzo}
\address[A. Uderzo]{Dept. of Mathematics and Applications, University
of Milano - Bicocca, Milano, Italy}
\email{{\tt amos.uderzo@unimib.it}}
\keywords{Strong vector equilibrium, contingent cone, nondifferentiable optimization,
generalized differentiation, subdifferential, mathematical programming with
equilibrium constraint}
\subjclass[2010]{49J53, 49J52, 90C33}
\date{\today}
\newcommand{\R}{\mathbb R}
\newcommand{\N}{\mathbb N}
\newcommand{\X}{\mathbb X}
\newcommand{\Uball}{{\mathbb B}}
\newcommand{\Usfer}{{\mathbb S}}
\newcommand{\nullv}{\mathbf{0}}
\newcommand{\cl}{{\rm cl}\, }
\newcommand{\cone}{{\rm cone}\, }
\newcommand{\bd}{{\rm bd}\, }
\newcommand{\inte}{{\rm int}\, }
\newcommand{\Lin}{\mathscr{L}}
\newcommand{\PH}{\mathscr{P}\hskip-0.1cm\mathscr{H}}
\newcommand{\Equi}{{\mathcal S}{\mathcal E}}
\newcommand{\MPVEC}{{\rm MPVEC}}
\newcommand{\VEP}{{\rm VEP}}
\newcommand{\parord}{\le_{{}_C}}
\newcommand{\Upsubd}{\widehat{\partial}^+}
\newcommand{\Fsubd}{\widehat{\partial}}
\newcommand{\Csubd}{{\partial}_C}
\newcommand{\dcone}[1]{{#1}^{{}^\ominus}}   
\newcommand{\ball}[2]{{\rm B}\left(#1;#2\right)}
\newcommand{\dist}[2]{{\rm dist}\left(#1;#2\right)}
\newcommand{\exc}[2]{{\rm exc}(#1;#2)}
\newcommand{\Tang}[2]{{\rm T}(#1;#2)}    
\newcommand{\Wang}[2]{{\rm T}_{\rm r}(#1;#2)}  
\newcommand{\Fang}[2]{{\rm T}_{\rm f}(#1;#2)}  
\newcommand{\Iang}[2]{{\rm I}(#1;#2)}  
\newcommand{\Ncone}[2]{{\rm N}(#1;#2)}
\newcommand{\Bder}[2]{{\rm D}_B#1(#2)}
\newcommand{\DHuder}[3]{{\rm D}^+_H#1(#2;#3)}
\newcommand{\Duder}[3]{{\rm D}^+_D#1(#2;#3)}
\newcommand{\DHlder}[3]{{\rm D}^-_H#1(#2;#3)}
\newcommand{\Dlder}[3]{{\rm D}^-_D#1(#2;#3)}
\begin{document}

\begin{abstract}
Vector equilibrium problems are a natural generalization to the
context of partially ordered spaces of the Ky Fan inequality, where
scalar bifunctions are replaced with vector bifunctions.
In the present paper, the local geometry of the strong solution
set to these problems is investigated through its inner/outer
conical approximations.
Formulae for approximating the contingent cone
to the set of strong vector equilibria are established, which are
expressed via Bouligand derivatives of the bifunctions.
These results are subsequently employed for deriving both necessary
and sufficient optimality conditions for problems, whose feasible region
is the strong solution set to a vector equilibrium problem, so they
can be cast in mathematical programming with equilibrium constraints.
\end{abstract}

\maketitle




\section{Introduction}

Given a mapping (vector-valued bifunction)
$f:\R^n\times\R^n\longrightarrow\R^m$, with $\R^m$
being partially ordered by a (nontrivial) closed, convex
and pointed cone $C\subset\R^m$, and a nonempty, closed set
$K\subseteq\R^n$, by strong vector equilibrium problem
the problem is meant
$$
  \hbox{ find $x\in K$ such that } f(x,z)\in C,
  \quad\forall z\in K.  \leqno  (\VEP)
$$
The set of all solutions (if any) to problem $(\VEP)$ will be
denoted throughout the paper by $\Equi$, namely
\begin{equation}     \label{eq:interEquiref}
   \Equi=\bigcap_{z\in K}f^{-1}(\cdot,z)(C)\cap K,
\end{equation}
and referred to as the set of strong vector equilibria.
Clearly, strong vector equilibrium problems are a natural
generalization of the well-known Ky Fan inequality to
the more general context of partially ordered vector spaces.
Similarly as their scalar counterpart, they provide a convenient
format to treat in an unifying framework several different classes
of problems, ranging from multicriteria optimization problems,
vector Nash equilibrium problems,
to vector variational inequalities and complementarity problems
(see, for instance, \cite{AnKoYa02,AnKoYa18,AnOeSc97,BiHaSc97,Gong06,
GoKiYa08,Oett97}).

As for many problems formalized by traditional or generalized equations,
for several purposes the mere knowledge of a single solution to
$(\VEP)$ is not enough. Very often, once a strong vector equilibrium $\bar x\in\Equi$
has been found (or shown to exist), one would need/aspire to glean insights
into the behaviour of the set $\Equi$ around $\bar x$.
The fact that $\bar x$ may be an isolated element of $\Equi$ or lie in
the boundary or, instead, be an interior element of this set, might
change dramatically the outcome of a further analysis, where the
local geometry of $\Equi$ around $\bar x$ does matter.
On the other hand, finding all the solutions of $(\VEP)$ around $\bar x$
could be a task that one can hardly accomplish in many concrete cases.
What is reasonably achievable sometimes is only a local approximation
of $\Equi$ near $\bar x$, yet suitable in specific circumstances.
To mention one of them, with connection with the subject of the
present paper, consider the successful approach to optimality conditions
for constrained problems, where at a certain step
an approximated representation of the feasible region already does the trick.

It is well known that in nonsmooth analysis tangent cones, working
as a surrogate of derivative for sets, are the main tools for
formalizing first-order (and beyond, if needed) approximations
of sets. So the main aim of the present paper is to provide
elements for a conical approximation of strong vector equilibria.
It should be remarked that a difficulty in undertaking such a task
comes from the fact that the set $\Equi$ is not explicitly defined.
Besides, if addressing this question through the reformulation of
$\Equi$ as in $(\ref{eq:interEquiref})$, classical results on the
tangent cone representation of such sets as $f^{-1}(\cdot,z)(C)\cap K$,
now at disposal in nonsmooth analysis as a modern development of
the Lyusternik theorem (see \cite{Mord06a,Mord18,Schi07}), seem not
be readily exploitable because of the intersection over $K$ appearing
in $(\ref{eq:interEquiref})$.

In this context, the findings exposed in what follows are focussed
on representing the contingent cone to $\Equi$ at a given strong vector
equilibrium $\bar x$, which is one of the most employed conical approximations
in the literature devoted to variational analysis and optimization.
The representation of such a cone will be performed by means of
first-order approximations of the problem data, namely generalized
derivatives of the bifunction $f$ and tangent cones of the set
$K$ defining $(\VEP)$. In other words, following a principle deep-rooted
in many contexts of nonlinear analysis, approximations of the solution
set to a given problem are obtained by means of exact solutions to approximated
problems.

The paper is structured as follows. Section \ref{Sect:2} aims at recalling preliminary
notions of nonsmooth analysis, which play a role in formulating and
establishing the achievements of the paper.
Section \ref{Sect:3} contains the main results concerning the first-order
approximation of the contingent cone to $\Equi$.
In Section \ref{Sect:4}, these results are applied to derive both necessary
and sufficient optimality conditions for nondifferentiable optimization problems,
whose constraint systems are formalized as a strong vector equilibrium problem.

Below, the basic notations employed in the paper are listed.
The acronyms l.s.c., u.s.c and p.h. stand for lower semicontinuous, upper
semicontinuous and positively homogeneous, respectively.
$\R^d$ denotes the finite-dimensional Euclidean space, with dimension $d\in\N$.
The closed  ball centered at an element $x\in\R^d$, with radius $r\ge 0$, is
denoted by $\ball{x}{r}$. In particular, $\Uball=\ball{\nullv}{1}$ stands for
the unit ball, whereas $\Usfer$ stands for the unit sphere, $\nullv$ denoting
the null vector of an Euclidean space.
Given a subset $S\subseteq\R^d$, the distance of a point $x$ from a set $S$ is denoted
by $\dist{x}{S}$, with the convention that $\dist{x}{\varnothing}=+\infty$.
The prefix $\inte S$ denotes the interior of $S$, $\cl S$ denotes its closure, whereas
$\cone S$ its conical hull, respectively. Given two subsets $A$ and $B$ of the same
space, the excess of $A$ over $B$ is indicated by $\exc{A}{B}=
\sup_{a\in A}\dist{a}{B}$.
By $\PH(\R^n,\R^m)$ the space of all continuous p.h. mappings
acting between $\R^n$ and $\R^m$ is denoted, equipped with the norm
$\|h\|_{\PH}=\sup_{u\in\Usfer}\|h(u)\|$, $h\in\PH(\R^n,\R^m)$, while
$\Lin(\R^n,\R^m)$ denotes its subspace of all linear
operators.
The inner product of an Euclidean space will be denoted
by $\langle\cdot,\cdot\rangle$.
Whenever $C$ is a cone in $\R^n$, by $\dcone{C}=\{v\in\R^n:\ \langle v,
c\rangle\le 0,\quad\forall c\in C\}$ the negative dual (a.k.a. polar)
cone to $C$ is denoted.
Given a function $\varphi:\X\longrightarrow\R\cup\{\pm\infty\}$,
the symbol $\partial\varphi(x)$ denotes the subdifferential
of $\varphi$ at $x$ in the sense of convex analysis (a.k.a. Fenchel
subdifferential). The normal cone to a set $S\subseteq\R^q$ at $x\in S$
in the sense of convex analysis is denoted by $\Ncone{x}{S}=\{v\in\R^n:\
\langle v,s-x\rangle,\ \forall s\in S\}$.

\vskip1cm


\section{Preliminaries}    \label{Sect:2}

\subsection{Approximation of sets}

Given a nonempty set $K\subseteq\R^n$ and $\bar x\in K$, in the sequel
the following different notions of tangent cone will be mainly employed:
\begin{itemize}

\item[(i)] the contingent (a.k.a. Bouligand tangent) cone to $K$ at $\bar x$,
which is defined by
$$
  \Tang{\bar x}{K}=\{v\in\R^n:\ \exists (v_n)_n,\ v_n\to v,\
  \exists (t_n)_n,\ t_n\downarrow 0:\ \bar x+t_nv_n\in K,\
  \forall n\in \N\};
$$

\item[(ii)] the cone of radial (a.k.a. weak feasible) directions to $K$ at $\bar x$,
which is defined by
$$
  \Wang{\bar x}{K}=\{v\in\R^n:\ \forall\epsilon>0\
  \exists t_\epsilon\in (0,\epsilon):\ \bar x+t_\epsilon v
  \in K\}.
$$
\end{itemize}
Clearly, for every $K\subseteq\R^n$ and $\bar x\in K$,
it is $\Wang{\bar x}{K}\subseteq\Tang{\bar x}{K}$. Moreover $\Tang{\bar x}{K}$
is always closed.
If, in particular, $K$ is convex, then the following representations hold
\begin{equation}     \label{eq:convexWTangcone}
  \Wang{\bar x}{K}=\cone(K-\bar x) \quad\hbox{ and }\quad
  \Tang{\bar x}{K}=\cl(\cone(K-\bar x))=\cl\Wang{\bar x}{K}
\end{equation}
(see \cite[Proposition 11.1.2(d)]{Schi07}). Thus, in such an event, both
$\Wang{\bar x}{K}$ and $\Tang{\bar x}{K}$ are convex.
It is well known that an equivalent (variational) reformulation of the notion of
contingent cone is provided by the equality
\begin{equation}     \label{eq:charTangcone}
  \Tang{\bar x}{K}=\left\{v\in\R^n:\ \liminf_{t\downarrow 0}
  {\dist{\bar x+tv}{K}\over t}=0\right\}.
\end{equation}

\begin{remark}    \label{rem:polyWang}
Whenever a convex set $K\subseteq\R^n$ is, in particular, polyhedral,
one has $\Wang{\bar x}{K}=\Tang{\bar x}{K}$. To see this, it
suffices to exploit the formulae in $(\ref{eq:convexWTangcone})$
and to observe that, in the present circumstance, $\Wang{\bar x}{K}$
happens to be closed. The latter follows from the fact that, if $S$
is a closed affine half-space in $\R^n$, then $\Wang{\bar x}{S}=
\cone(S-\bar x)=S-\bar x$ is a closed set and from the fact that,
if $K_1$ and $K_2$ are convex sets with $\bar x\in\ K_1\cap K_2$,
then it holds $\Wang{\bar x}{K_1\cap K_2}=\Wang{\bar x}{K_1}\cap
\Wang{\bar x}{K_2}$.
\end{remark}

Along with the above cones, in the context of optimization problems
some further notions of first-order conical approximation will be needed:
\begin{itemize}

\item[(iii)] the cone of radial inner (a.k.a. feasible) directions
to $K$ at $\bar x$, which is defined by
$$
  \Fang{\bar x}{K}=\{v\in\R^n:\ \exists\epsilon>0:\ \forall
  t\in (0,\epsilon),\ \bar x+tv\in K\};
$$

\item[(vi)] the cone of inner directions (a.k.a. interior displacements)
to $K$ at $\bar x$, which is defined by
$$
  \Iang{\bar x}{K}=\{v\in\R^n:\ \exists\epsilon>0:\ \forall
  u\in\ball{v}{\epsilon},\ \forall t\in (0,\epsilon),\ \bar x+tu\in K\}.
$$
\end{itemize}

For a systematic discussion about properties of the above tangent
cones and their relationships, the reader is referred for instance to
\cite[Chapter 4]{AubFra90}, \cite[Chapter I.1]{DemRub95}, \cite{ElsThi88},
\cite[Chapter 2]{Peno13}, and \cite[Chapter 11]{Schi07}.

\subsection{Approximation of scalar functions}

Given a function $\varphi:\R^n\longrightarrow\R\cup\{\pm\infty\}$,
let $\bar x\in\varphi^{-1}(\R)$. The set
$$
   \Upsubd\varphi(\bar x)=\left\{v\in\R^n:\ \limsup_{x\to\bar x}
   {\varphi(x)-\varphi(\bar x)-\langle v,x-\bar x\rangle\over\|x-\bar x\|}
   \le 0\right\}
$$
is called (Fr\'echet) upper subdifferential of $\varphi$ at $\bar x$.
Any element $v\in\Upsubd\varphi(\bar x)$ can be characterized by the
existence of a function $\psi:\R^n\longrightarrow\R$ such that
$\varphi(\bar x)=\psi(\bar x)$, $\varphi(x)\le\psi(x)$, for
every $x\in\R^n$, $\psi$ is (Fr\'echet) differentiable at $\bar x$
and $v=\nabla\psi(\bar x)$. If $\varphi:\R^n\longrightarrow\R$
is concave, then $\Upsubd\varphi(\bar x)$ coincides with the
superdifferential (a.k.a. upper subdifferential) in the sense of
convex analysis, i.e. $-\partial(-\varphi)(\bar x)$.

Whenever $\varphi$ is an u.s.c. function, the upper subdifferential
admits another characterization in terms of Dini-Hadamard directional
derivative, in fact being equivalent to the Dini-Hadamard upper
subdifferential (in finite-dimensional spaces, the Fr\'echet bornology
is equivalent to the Hadamard bornology). More precisely, it holds
\begin{equation}   \label{eq:UpsubdDHder}
  \Upsubd\varphi(\bar x)=\{v\in\R^n:\ \langle v,w\rangle\ge
  \DHuder{\varphi}{\bar x}{w},\quad\forall w\in\R^n\},
\end{equation}
where
$$
  \DHuder{\varphi}{\bar x}{w}=\limsup_{u\to w\atop t\downarrow 0}
  {\varphi(\bar x+tu)-\varphi(\bar x)\over t}
$$
denotes the Dini-Hadamard upper directional derivative of $\varphi$
at $\bar x$, in the direction $w\in\R^n$ (see \cite[Chapter 1.3]{Mord18},
\cite[Chapter 8.B]{RocWet98}).
Let us recall that, whenever $\varphi$ is locally Lipschitz around $\bar x$,
its Dini-Hadamard directional derivative at $\bar x$ takes the following simpler
form
$$
  \Duder{\varphi}{\bar x}{w}=\limsup_{t\downarrow 0}
  {\varphi(\bar x+tw)-\varphi(\bar x)\over t},
$$
which is known as Dini upper directional derivative. The lower versions
of these generalized derivatives are
$$
  \DHlder{\varphi}{\bar x}{w}=\liminf_{u\to w\atop t\downarrow 0}
  {\varphi(\bar x+tu)-\varphi(\bar x)\over t},
$$
called the Dini-Hadamard lower directional (a.k.a. contingent) derivative
of $\varphi$ at $\bar x$, in the direction $w$, and
$$
  \Dlder{\varphi}{\bar x}{w}=\liminf_{t\downarrow 0}
  {\varphi(\bar x+tw)-\varphi(\bar x)\over t},
$$
called the Dini lower directional derivative of $\varphi$
at $\bar x$, in the direction $w$.

The set
$$
   \Fsubd\varphi(\bar x)=\left\{v\in\R^n:\ \liminf_{x\to\bar x}
   {\varphi(x)-\varphi(\bar x)-\langle v,x-\bar x\rangle\over\|x-\bar x\|}
   \ge 0\right\}
$$
is called (Fr\'echet) regular subdifferential of $\varphi$ at $\bar x$.
Whenever $\varphi$ is l.s.c. around $\bar x$, it admits the following representation
in terms of Dini-Hadamard lower directional generalized derivative
\begin{equation}   \label{eq:FsubdDHder}
  \Fsubd\varphi(\bar x)=\{v\in\R^n:\ \langle v,w\rangle\le
  \DHlder{\varphi}{\bar x}{w},\quad\forall w\in\R^n\}.
\end{equation}
Whenever $\varphi$ is Fr\'echet differentiable at $\bar x$, one has
$\Upsubd\varphi(\bar x)=\Fsubd\varphi(\bar x)=\{\nabla\varphi(\bar x)\}$,
where $\nabla\varphi(\bar x)$ denotes the gradient of $\varphi$ at
$\bar x$.

Comprehensive discussions from various viewpoints as well
as detailed material about these generalized derivatives can
be found in many textbooks devoted to nonsmooth analysis,
among which \cite[Chapter I.1]{DemRub95}, \cite[Chapter 1]{Mord18},
\cite[Chapter 2]{Peno13}, \cite[Chapter 8]{RocWet98}, \cite{Schi07}.

\subsection{Approximation of mappings and bifunctions}

A mapping $g:\R^n\longrightarrow\R^m$ is said to be $B$-differentiable
at $\bar x\in\R^n$ if there exists a mapping $\Bder{g}{\bar x}\in\PH(\R^n,\R^m)$
such that
$$
  \lim_{x\to\bar x}{\|g(x)-g(\bar x)-\Bder{g}{\bar x}(x-\bar x)\|
  \over\|x-\bar x\|}=0.
$$
As a consequence of the continuity of $\Bder{g}{\bar x}$, it is readily
seen that if $g$ is $B$-differentiable at $\bar x$, it is also continuous
at the same point.
Notice that, when, in particular, $\Bder{g}{\bar x}\in\Lin(\R^n,\R^m)$, $g$ turns out to be (Fr\'echet)
differentiable at $\bar x$. In such an event, its derivative, represented by its
Jacobian matrix, will be indicated by $\nabla g(\bar x)$.
Given a nonempty set $K\subseteq\R^n$, a bifunction $f:\R^n\times\R^n
\longrightarrow\R^m$ is said to be $B$-differentiable at $\bar x\in K$,
uniformly on $K$, if there exists a family $\{\Bder{f}{\bar x,z}\in
\PH(\R^n,\R^m):\ z\in K\}$ such that for every $\epsilon>0$ $\exists\delta_\epsilon>0$
such that
$$
  \sup_{z\in K}{\|f(x,z)-f(\bar x,z)-\Bder{f}{\bar x,z}(x-\bar x)\|
  \over\|x-\bar x\|}<\epsilon,\quad\forall x\in\ball{\bar x}{\delta_\epsilon}.
$$
It should be clear that the above notion of generalized differentiation for bifunctions
is a kind of partial differentiation, in considering variations of a mapping with respect
to changes of one variable only.

\begin{example}    \label{ex:unifBdiff3}
(i) Separable mappings: let us consider mappings $f:\R^n\times\R^n\longrightarrow\R^m$,
which can be expressed in the form
$$
  f(x,z)=f_1(x)+f_2(z),
$$
for proper $f_1,\, f_2:\R^n\longrightarrow\R^m$. Whenever $f_1$ is $B$-differentiable at $\bar x$,
with $B$-derivative $\Bder{f_1}{\bar x}$, the bifunction $f$ is $B$-differentiable at $\bar x$
uniformly on $K$, with $\{\Bder{f}{\bar x,z}:\ z\in K\}=\{\Bder{f_1}{\bar x}\}$.

(ii) Factorable mappings: whenever a mapping $f:\R^n\times\R^n\longrightarrow\R^m$
can be factorized as
$$
  f(x,z)=\alpha(z)g(x),
$$
where $g:\R^n\longrightarrow\R^m$ is $B$-differentiable at $\bar x$, with $B$-derivative
$\Bder{g}{\bar x}$, and $\alpha:\R^n\longrightarrow\R$ is bounded on $K$, the bifunction $f$
is $B$-differentiable at $\bar x$ uniformly on $K$, with $\{\Bder{f}{\bar x,z}:\ z\in K\}=
\{\alpha(z)\Bder{g}{\bar x}: z\in\R^n\}$.

(iii) Composition with differentiable mappings: if $f:\R^n\times\R^n\longrightarrow\R^p$
is $B$-differentiable at $\bar x$ uniformly on $K$ and $g:\R^p\longrightarrow\R^m$ is
Fr\'echet differentiable at each point $f(\bar x,z)$, with $z\in K$, then their
composition $g\circ f$ turns out to be $B$-differentiable at $\bar x$ uniformly on $K$,
with $\{\Bder{(g\circ f)}{\bar x,z}:\ z\in K\}=\{\nabla g(f(\bar x,z))\Bder{f}{\bar x,z}:
\ z\in K\}$.
\end{example}

A stronger notion of uniform $B$-differentiability will be needed for
one of the main results, which is based on strict $B$-differentiability.
Given a nonempty set $K\subseteq\R^n$, a bifunction $f:\R^n\times\R^n
\longrightarrow\R^m$ is said to be strictly $B$-differentiable at $\bar x\in K$,
uniformly on $K$, if there exists a family $\{\Bder{f}{\bar x,z}\in
\PH(\R^n,\R^m):\ z\in K\}$ such that for every $\epsilon>0$ $\exists\delta_\epsilon>0$
such that
$$
  \sup_{z\in K}{\|f(x_1,z)-f(x_2,z)-\Bder{f}{\bar x,z}(x_1-x_2)\|
  \over\|x_1-x_2\|}<\epsilon,\quad\forall x_1,\, x_2
  \in\ball{\bar x}{\delta_\epsilon},\ x_1\ne x_2.
$$

\subsection{Distance from strong vector equilibria}

The function $\nu:\R^n\longrightarrow [0,+\infty)$, defined by
\begin{equation}    \label{eq:defnumf}
  \nu(x)=\sup_{z\in K}\dist{f(x,z)}{C},
\end{equation}
can be exploited as a natural measure of the distance of a given
point $x\in\R^n$ from being a solution to $(\VEP)$.
Clearly it is $\Equi=\nu^{-1}(0)\cap K$, while positive values of $\nu$
quantify the violation of the strong equilibrium condition in $(\VEP)$.

A local error bound (in terms of $vu$) is said to be valid near $\bar x\in\Equi$ for
problem $(\VEP)$ if there exist positive $\kappa$ and $\delta$
such that
\begin{equation}     \label{in:erboSE}
   \dist{x}{\Equi}\le\kappa\nu(x),\quad\forall x\in
   \ball{\bar x}{\delta}\cap K.
\end{equation}
Notice that, whereas for computing $\dist{x}{\Equi}$ one needs to
know all the solutions to $(\VEP)$ near $\bar x$, the value of
$\nu(x)$ can be computed directly by means of problem data.
A study of sufficient conditions for the error in bound in
$(\ref{in:erboSE})$ to hold has been recently undertaken in \cite{Uder22}.
In particular, the following global error bound condition
under an uniform $B$-differentiability assumption on $f$ is known to
hold.

\begin{proposition}[\cite{Uder22}]
With reference to a problem $(\VEP)$, suppose that:
\begin{itemize}

\item[(i)] each function $x\mapsto f(x,z)$ is $C$-u.s.c.
on $K$, for every $z\in K$;

\item[(ii)] the set-valued mapping $x\leadsto f(x,K)$ takes $C$-bounded
values on $K$;

\item[(iii)] $K$ is convex;

\item[(iv)] $f$ is $B$-differentiable uniformly on $K$  at each
point of $K\backslash\Equi$;

\item[(v)] there exists $\sigma>0$ with the property that for every $x_0\in
K\backslash\Equi$ there is $u_0\in\Usfer\cap\cone(K-x_0)$ such that
$$
  \Bder{f}{x_0,z}(u_0)+\sigma\Uball\subseteq C,\quad\forall z\in K.
$$
\end{itemize}
Then, $\Equi$ is nonempty, closed and the following estimate holds true
$$
  \dist{x}{\Equi}\le{\nu(x)\over\sigma},\quad\forall x\in K.
$$
\end{proposition}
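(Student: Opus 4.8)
The plan is to read the asserted estimate as a \emph{global error bound} for the nonnegative residual $\nu$ and to derive it from a uniform lower bound on the strong slope of $\nu$ relative to $K$, via Ekeland's variational principle. Throughout I would lean on three elementary properties of $d_C=\dist{\cdot}{C}$: it is $1$-Lipschitz, convex, and \emph{antitone} for the order $\parord$, meaning $y_1\parord y_2\Rightarrow\dist{y_2}{C}\le\dist{y_1}{C}$. The last property is proved in one line: if $p=\Pi(y_1;C)$ and $y_2-y_1\in C$, then $p+(y_2-y_1)\in C$ (a closed convex cone is stable under addition), whence $\dist{y_2}{C}\le\|y_2-(p+(y_2-y_1))\|=\|y_1-p\|=\dist{y_1}{C}$.

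First I would record the regularity of $\nu$. By (i), $C$-upper semicontinuity of $f(\cdot,z)$ at a point $\bar x\in K$ gives, for each $\epsilon>0$, an inclusion $f(x,z)\parord f(\bar x,z)+\epsilon b$ with $\|b\|\le1$ for $x$ near $\bar x$ in $K$; combined with antitonicity and the $1$-Lipschitz property this yields $\dist{f(x,z)}{C}\ge\dist{f(\bar x,z)}{C}-\epsilon$, so each $x\mapsto\dist{f(x,z)}{C}$ is l.s.c.\ on $K$, and hence so is their supremum $\nu$. Assumption (ii) bounds this supremum, so $\nu$ is finite on $K$. Since $\nu\ge0$ is l.s.c.\ and $K$ is closed, $\Equi=\nu^{-1}(0)\cap K=\{x\in K:\nu(x)\le0\}$ is closed as a sublevel set intersected with $K$.

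The core step is the descent inequality. Fix $x_0\in K\setminus\Equi$, so $\nu(x_0)>0$, and take $u_0\in\Usfer\cap\cone(K-x_0)$ as in (v); by (iii) and $(\ref{eq:convexWTangcone})$ one has $\cone(K-x_0)=\Wang{x_0}{K}$, hence $x_0+tu_0\in K$ for all small $t>0$. Writing $y_z=f(x_0,z)$, $v_z=\Bder{f}{x_0,z}(u_0)$ and $d_z=\dist{y_z}{C}$, the key geometric estimate is
\[
  \dist{y_z+tv_z}{C}\le|d_z-\sigma t|,\qquad z\in K,\ t>0,
\]
which I would prove (for $y_z\notin C$; the case $y_z\in C$ is immediate since then $v_z\in C$) by testing the point $\Pi(y_z;C)+t(v_z+\sigma n_z)$, where $n_z=(y_z-\Pi(y_z;C))/d_z$: this point lies in $C$ because $v_z+\sigma n_z\in C$ (as $\|\sigma n_z\|=\sigma$ and $v_z+\sigma\Uball\subseteq C$ by (v)) and $C$ is a convex cone, and computing the norm gives exactly $|d_z-\sigma t|$. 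Since $|d_z-\sigma t|\le d_z-\sigma t$ when $d_z\ge\sigma t$ and $|d_z-\sigma t|\le\sigma t$ otherwise, splitting the supremum over $z$ at the threshold $d_z=\sigma t$ and using $d_z\le\nu(x_0)$ yields, for $t$ small enough that $\sigma t<\nu(x_0)/2$, the clean bound $\sup_z\dist{y_z+tv_z}{C}\le\nu(x_0)-\sigma t$. Feeding in the uniform remainder of (iv), namely $\sup_z\|f(x_0+tu_0,z)-y_z-tv_z\|=o(t)$, together with the $1$-Lipschitz property of $d_C$, I obtain $\nu(x_0+tu_0)\le\nu(x_0)-\sigma t+o(t)$, which forces the strong slope $|\nabla\nu|_K(x_0)=\limsup_{x\to x_0,\,x\in K,\,x\ne x_0}(\nu(x_0)-\nu(x))^+/\|x-x_0\|\ge\sigma$ at every $x_0\in K\setminus\Equi$. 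I emphasize that the trick $|d_z-\sigma t|\le\max\{d_z,\sigma t\}$ is what lets me pass to the supremum without any uniform bound on $\|\Bder{f}{x_0,z}\|$; this uniform passage over $z$, matching the remainder of (iv) against the supremum defining $\nu$, is the step I expect to be the most delicate.

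Finally I would invoke Ekeland's variational principle on the complete metric space $(K,\|\cdot\|)$ (complete since $K$ is closed): given $x\in K$ with $\nu(x)>0$ and any $\gamma\in(0,\sigma)$, it produces $\bar x\in K$ with $\nu(\bar x)\le\nu(x)$, $\|\bar x-x\|\le\nu(x)/\gamma$ and $\nu(y)>\nu(\bar x)-\gamma\|y-\bar x\|$ for all $y\ne\bar x$. Were $\nu(\bar x)>0$, then $\bar x\in K\setminus\Equi$ and $|\nabla\nu|_K(\bar x)\ge\sigma>\gamma$ would contradict this last inequality; hence $\nu(\bar x)=0$, i.e.\ $\bar x\in\Equi$, so $\Equi\ne\varnothing$ and $\dist{x}{\Equi}\le\|\bar x-x\|\le\nu(x)/\gamma$. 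Letting $\gamma\uparrow\sigma$ gives $\dist{x}{\Equi}\le\nu(x)/\sigma$; the case $\nu(x)=0$ is trivial, and nonemptiness of $\Equi$ holds in all cases since $K\ne\varnothing$.
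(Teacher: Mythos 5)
Your proof is correct: the lower semicontinuity of $\nu$ via the antitonicity of $\dist{\cdot}{C}$, the projection-based descent estimate $\dist{y_z+tv_z}{C}\le|d_z-\sigma t|$ passed uniformly through the supremum over $z$, and the concluding application of Ekeland's variational principle on $K$ all check out, and together they deliver exactly the stated closedness, nonemptiness and the bound $\dist{x}{\Equi}\le\nu(x)/\sigma$. Note, however, that this paper states the proposition as an imported result from \cite{Uder22} and gives no proof of it here, so there is nothing in this document to compare against; the decrease-principle-plus-Ekeland route you follow is the canonical way to establish error bounds of this kind and is in the spirit of the cited source.
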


\vskip1cm


\section{Tangential approximation of $\Equi$}    \label{Sect:3}

\begin{theorem}[Inner approximation]     \label{thm:innerapprox}
With reference to a problem $(\VEP)$, let $\bar x\in\Equi$.
Suppose that:

\begin{itemize}

\item[(i)] $f$ is $B$-differentiable at $\bar x$, uniformly on $K$,
with $\{\Bder{f}{\bar x,z}:\ z\in K\}$;

\item[(ii)] a local error bound such as $(\ref{in:erboSE})$
is valid near $\bar x$.

\end{itemize}
Then, it holds
\begin{equation}    \label{in:inapproxTangEqui}
  \bigcap_{z\in K}\Bder{f}{\bar x,z}^{-1}(C)\cap\Wang{\bar x}{K}
  \subseteq\Tang{\bar x}{\Equi}.
\end{equation}
\end{theorem}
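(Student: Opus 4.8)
The plan is to show that an arbitrary element of the left-hand cone belongs to $\Tang{\bar x}{\Equi}$ by exhibiting, for a suitable sequence of steps $t_n\downarrow 0$, nearby points of $\Equi$ lying in approximately the right direction. Concretely, I would fix $v\in\bigcap_{z\in K}\Bder{f}{\bar x,z}^{-1}(C)\cap\Wang{\bar x}{K}$ and use the variational characterization $(\ref{eq:charTangcone})$ of the contingent cone: it suffices to prove that
\begin{equation*}
  \liminf_{t\downarrow 0}\frac{\dist{\bar x+tv}{\Equi}}{t}=0.
\end{equation*}
Because $v\in\Wang{\bar x}{K}$, there is a sequence $t_n\downarrow 0$ with $\bar x+t_nv\in K$. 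Along this sequence the error bound $(\ref{in:erboSE})$ applies (for $n$ large, $\bar x+t_nv\in\ball{\bar x}{\delta}\cap K$), giving $\dist{\bar x+t_nv}{\Equi}\le\kappa\,\nu(\bar x+t_nv)$. The whole proof then reduces to controlling $\nu(\bar x+t_nv)=\sup_{z\in K}\dist{f(\bar x+t_nv,z)}{C}$ and showing it is $o(t_n)$.

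The key estimate is on $\dist{f(\bar x+t_nv,z)}{C}$. I would write, using $\bar x\in\Equi$ so that $f(\bar x,z)\in C$, and positive homogeneity of $\Bder{f}{\bar x,z}$,
\begin{equation*}
  f(\bar x+t_nv,z)=f(\bar x,z)+t_n\,\Bder{f}{\bar x,z}(v)+r_n(z),
\end{equation*}
where $r_n(z)=f(\bar x+t_nv,z)-f(\bar x,z)-\Bder{f}{\bar x,z}(t_nv)$. The defining inclusion $v\in\Bder{f}{\bar x,z}^{-1}(C)$ means $\Bder{f}{\bar x,z}(v)\in C$, hence $f(\bar x,z)+t_n\Bder{f}{\bar x,z}(v)\in C$ since $C$ is a convex cone. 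Because distance to the convex set $C$ is $1$-Lipschitz, this yields $\dist{f(\bar x+t_nv,z)}{C}\le\|r_n(z)\|$. Taking the supremum over $z\in K$ and invoking the uniform $B$-differentiability hypothesis (i), for any $\varepsilon>0$ and $n$ large we get $\sup_{z\in K}\|r_n(z)\|\le\varepsilon\,t_n\|v\|$, so $\nu(\bar x+t_nv)\le\varepsilon\,t_n\|v\|$.

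Combining the two bounds gives $\dist{\bar x+t_nv}{\Equi}/t_n\le\kappa\varepsilon\|v\|$ for $n$ large, and letting $\varepsilon\downarrow 0$ forces the liminf to be zero, which is the desired membership $v\in\Tang{\bar x}{\Equi}$. The main obstacle, and the point where the hypotheses must be used carefully, is the passage from the pointwise remainder control to the uniform bound on $\nu$: it is essential that the $B$-differentiability in (i) is \emph{uniform on $K$}, since the supremum over the (possibly unbounded) index set $z\in K$ must be dominated by a single $o(t_n)$ term rather than merely each $\dist{f(\bar x+t_nv,z)}{C}$ being $o(t_n)$ for fixed $z$. I would also take care to handle the trivial case $v=\nullv$ separately and to note that the error bound only needs to hold along the sequence $\bar x+t_nv\in K$, so no feasibility of intermediate points is required.
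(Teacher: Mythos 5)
Your proposal is correct and follows essentially the same route as the paper's proof: both reduce membership in $\Tang{\bar x}{\Equi}$ to the variational characterization $(\ref{eq:charTangcone})$, pick feasible steps $\bar x+t v\in K$ via $v\in\Wang{\bar x}{K}$, bound $\nu(\bar x+tv)$ by the uniform $B$-differentiability remainder using $f(\bar x,z)\in C$, $\Bder{f}{\bar x,z}(v)\in C$ and convexity of the cone $C$, and then invoke the error bound $(\ref{in:erboSE})$. The only differences are cosmetic (sequence formulation versus the paper's $\tau$--$\epsilon$ formulation, and the $1$-Lipschitz distance argument in place of the excess computation).
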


\begin{proof}
Let us start with observing that, since it is $\Bder{f}{\bar x,z}\in
\PH(\R^n,\R^m)$ for every $z\in K$, and $C$ is a cone, each set
$\Bder{f}{\bar x,z}^{-1}(C)$ turns out to be a cone containing $\nullv$,
as well as $\Wang{\bar x}{K}$ does by definition.
Thus, if taking $v=\nullv\in\bigcap_{z\in K}\Bder{f}{\bar x,z}^{-1}(C)
\cap\Wang{\bar x}{K}$, the inclusion $v\in\Tang{\bar x}{\Equi}$ obviously
holds as the latter cone is closed.
So, take an arbitrary $v\in\left(\bigcap_{z\in K}\Bder{f}{\bar x,z}^{-1}(C)
\cap\Wang{\bar x}{K}\right)\backslash\{\nullv\}$.
Since both the sets in the inclusion in $(\ref{in:inapproxTangEqui})$
are cones, one can assume without any loss of generality that $\|v\|=1$.
In the light of the characterization via $(\ref{eq:charTangcone})$,
$v$ is proven to belong to $\Tang{\bar x}{\Equi}$ if one shows that
\begin{equation}      \label{eq:thesisreform}
    \liminf_{t\downarrow 0}{\dist{\bar x+tv}{\Equi}\over t}=0.
\end{equation}
Showing the equality in $(\ref{eq:thesisreform})$ amounts to show
that for every $\tau>0$ and $\epsilon>0$ there exists $t_0\in (0,\tau)$
such that
\begin{equation}    \label{eq:thesisreform2}
   {\dist{\bar x+t_0v}{\Equi}\over t_0}\le\epsilon.
\end{equation}
So, let us fix ad libitum $\tau$ and $\epsilon$.
Hypothesis (ii) ensures the existence of $\delta,\ \kappa>0$ as
in $(\ref{in:erboSE})$.
By virtue of hypothesis (i), corresponding to $\epsilon/\kappa$,
there exists $\delta_\epsilon>0$ such that
$$
  f(x,z)\in f(\bar x,z)+\Bder{f}{\bar x,z}(x-\bar x)+\kappa^{-1}\epsilon
  \|x-\bar x\|\Uball, \quad\forall x\in\ball{\bar x}{\delta_\epsilon},
  \ \forall z\in K,
$$
and hence, in particular,
$$
  f(\bar x+tv,z)\in f(\bar x,z)+t\Bder{f}{\bar x,z}(v)+\kappa^{-1}\epsilon
  t\Uball, \quad\forall t\in (0,\delta_\epsilon),
  \ \forall z\in K.
$$
By taking into account that $\bar x\in\Equi$ and $v\in\Bder{f}{\bar x,z}^{-1}(C)$
for every $z\in K$, the above inclusion implies
$$
   f(\bar x+tv,z)\in C+tC+\kappa^{-1}\epsilon t\Uball\subseteq
   C+\kappa^{-1}\epsilon t\Uball,\quad\forall t\in (0,\delta_\epsilon),
  \ \forall z\in K.
$$
In terms of the residual function $\nu$ introduced in $(\ref{eq:defnumf})$,
this means
\begin{eqnarray}     \label{in:resepst}
  \nu(\bar x+tv)=\sup_{z\in K}\dist{f(\bar x+tv,z)}{C} &\le&
  \exc{C+\kappa^{-1}\epsilon t\Uball}{C}=
  \exc{\kappa^{-1}\epsilon t\Uball}{C}  \nonumber  \\
  &\le& \kappa^{-1}\epsilon t ,\quad\forall t\in (0,\delta_\epsilon),
\end{eqnarray}
where the second equality holds because $C$ is a convex cone.
On the other hand, according to hypothesis (ii) there exists $\delta_0
\in (0,\min\{\tau,\delta,\delta_\epsilon\})$ such that
\begin{equation}    \label{in:erboEquidelta}
   \dist{x}{\Equi}\le\kappa\nu(x),\quad\forall x\in\ball{\bar x}{\delta_0}
   \cap K.
\end{equation}
Since it is $v\in\Wang{\bar x}{K}$, for some $t_*\in (0,\delta_0)$ it happens
$$
   \bar x+t_*v\in K\cap\ball{\bar x}{\delta_0},
$$
and therefore, by inequality $(\ref{in:erboEquidelta})$, one obtains
\begin{eqnarray}    \label{in:distnuv}
   \dist{\bar x+t_* v}{\Equi}\le\kappa\nu(\bar x+t_* v).
\end{eqnarray}
By combining inequalities $(\ref{in:resepst})$ and $(\ref{in:distnuv})$,
as it is $t_*<\delta_0<\delta_\epsilon$, one obtains
$$
    \dist{\bar x+t_* v}{\Equi}\le\kappa\cdot\kappa^{-1}\epsilon t_*
    =\epsilon t_*.
$$
The last inequality shows that $(\ref{eq:thesisreform2})$ is true
for $t_0=t_*\in (0,\tau)$, thereby completing the proof.
\end{proof}

The inclusion in $(\ref{in:inapproxTangEqui})$ states that, under proper
assumptions, any solution of the (approximated) problem
\begin{equation}    \label{in:HomVEP}
  \hbox{ find $v\in\Wang{\bar x}{K}$ such that } \Bder{f}{\bar x;z}(v)
  \in C,  \quad\forall z\in K,
\end{equation}
provides a vector, which is tangent to $\Equi$ at $\bar x$ in the sense
of Bouligand. Notice that problem $(\ref{in:HomVEP})$ is almost in the form
$(\VEP)$ (it would be exactly in the form $(\VEP)$ if $\Wang{\bar x}{K}=K$).
Roughly speaking, all of this means that if the problem data of
$(\VEP)$ are properly approximated ($K$ by its radial direction cone,
$f$ by its generalized derivatives in the sense of Bouligand,
respectively) near a reference solution $\bar x$, then the solutions
of the resulting approximated problem $(\ref{in:HomVEP})$ work as
a first-order approximation of the solution set to the original problem
$(\VEP)$.
Problem $(\ref{in:HomVEP})$ is typically expected to be easier than
$(\VEP)$ by virtue of the structural properties of its data.
Basically, $(\ref{in:HomVEP})$ can be regarded as a cone constrained
p.h. vector inequality system, so its solution set is a cone.
Furthermore, if $K$ is convex and $\Bder{f}{\bar x,z}:\R^n
\longrightarrow\R^m$ is $C$-concave for every $z\in K$, the latter
meaning that
$$
  \Bder{f}{\bar x,z}(v_1)+\Bder{f}{\bar x,z}(v_2)\parord
  \Bder{f}{\bar x,z}(v_1+v_2),\quad\forall v_1,\, v_2\in\R^n,
$$
where $\parord$ denotes the partial ordering on $\R^m$ induced in the
standard way by the cone $C$,
then the solution set to problem $(\ref{in:HomVEP})$ is a convex cone.

As a further comment to Theorem \ref{thm:innerapprox}, it must be
remarked that the inclusion in $(\ref{in:inapproxTangEqui})$
provides only a one-side approximation of $\Tang{\bar x}{\Equi}$,
which may happen to be rather rough. This fact is illustrated by the
next example.

\begin{example}[Inclusion $(\ref{in:inapproxTangEqui})$ may be strict]
Consider the problem $(\VEP)$ defined by the following data: $K=C=\R^2_+=
\{x=(x_1,x_2)\in\R^2:\ x_1\ge 0,\ x_2\ge 0\}$ and a vector-valued bifunction
$f:\R^2\times\R^2\longrightarrow\R^2$ given by
$$
  f(x_1,x_2,z_1,z_2)=\left(\begin{array}{c} {1\over 2}(-m_z^-x_1+x_2+1)^2 \\
                            \\
                     {1\over 2}(m_z^+x_1-x_2+1)^2
                     \end{array}\right),
$$
where
$$
  m_z^-=1-{1\over \|z\|^2+1}\qquad \hbox{ and }\qquad
  m_z^+=1+{1\over \|z\|^2+1}, \quad\ z\in\R^2.
$$
Since $f(x,z)\in\R^2_+$ for every $(x,z)\in\R^2\times\R^2$, it is clear
that $\Equi=K=\R^2_+$. Fix $\bar x=\nullv\in\Equi$, so one has
$$
  \Wang{\nullv}{K}=\Tang{\nullv}{\Equi}=\R^2_+.
$$
In view of the next calculations, it is convenient to observe that
$$
  f(x,z)=(g\circ h)(x,z),
$$
where the mappings $g:\R^2\longrightarrow\R^2$ and $h:\R^2\times\R^2\longrightarrow\R^2$
are given respectively by
$$
  g(y)=\left(\begin{array}{c} y_1^2/2 \\
                              y_2^2/2
                     \end{array}\right)
  \qquad \hbox{ and }\qquad
  h(x,z)=\left(\begin{array}{c} -m_z^-x_1+x_2+1 \\
                               m_z^+x_1-x_2+1
                     \end{array}\right).
$$
To check that the bifunction $h$ is $B$-differentiable at $\nullv$
uniformly on $\R^2_+$, with
$$
  \left\{\Bder{h}{\nullv,z}=\nabla h(\nullv,z)=
  \left(\begin{array}{rr} -m_z^- & 1 \\
                            m_z^+ & -1
                     \end{array}\right),\ z\in\R^2_+
  \right\}
$$
it suffices to observe that
\begin{eqnarray*}
  \|h(x,z)-h(\nullv,z)-\Bder{h}{\nullv,z}(x)\| &=&
  \left\|
  \left(\begin{array}{c} -m_z^-x_1+x_2+1 \\
                               m_z^+x_1-x_2+1
                     \end{array}\right)-
                     \left(\begin{array}{c} 1 \\
                                            1
                     \end{array}\right)-
    \left(\begin{array}{rr} -m_z^- & 1 \\
                            m_z^+ & -1
                     \end{array}\right)
       \left(\begin{array}{c} x_1 \\
                              x_2
                     \end{array}\right)
  \right\|  \\
  &=& 0, \quad\forall z\in\R^2_+.
\end{eqnarray*}
Thus, since $g$ is Fr\'echet differentiable at each point of $\R^2$
and
$$
  \nabla g(y)=\left(\begin{array}{rr} y_1 & 0 \\
                                      0  & y_2
                     \end{array}\right),
$$
according to what remarked in Example \ref{ex:unifBdiff3}(iii),
the mapping $f=g\circ h$ turns out to be $B$-differentiable at
$\nullv$ uniformly on $\R^2_+$, with
$$
 \Bder{f}{\nullv,z}=\nabla g(h(\nullv,z))\circ \Bder{h}{\nullv,z}=
 \left(\begin{array}{rr} 1 & 0 \\
                         0  & 1
                     \end{array}\right)
  \left(\begin{array}{rr} -m_z^- & 1 \\
                            m_z^+ & -1
                     \end{array}\right)=
                     \left(\begin{array}{rr} -m_z^- & 1 \\
                            m_z^+ & -1
                     \end{array}\right),\ z\in\R^2_+.
$$
Notice that a local error bound as in $(\ref{in:erboSE})$ is
evidently valid near $\nullv$ because it is $\Equi=K$.
Thus, all the hypotheses of Theorem \ref{thm:innerapprox} are
satisfied.

Now, one readily sees that
$$
  \Bder{f}{\nullv,z}(v)=\left(\begin{array}{c} -m_z^-v_1+v_2 \\
                            m_z^+v_1-v_2
                     \end{array}\right)\in\R^2_+
                     \qquad\hbox{ iff }\qquad
                     \left\{\begin{array}{c} -m_z^-v_1+v_2\ge 0 \\
                     \\
                            m_z^+v_1-v_2\ge 0.
                     \end{array}\right.
$$
This leads to find
$$
  \Bder{f}{\nullv,z}^{-1}(\R^2_+)=\{v\in\R^2:\
  m_z^-v_1\le v_2\le m_z^+v_1\},\quad\forall z\in\R^2_+.
$$
Since one has
$$
  \lim_{\|z\|\to\infty} m_z^-=1^-=1=1^+=
  \lim_{\|z\|\to\infty} m_z^+,
$$
it results in
$$
  \bigcap_{z\in\R^2_+}\Bder{f}{\nullv,z}^{-1}(\R^2_+)\cap
  \Wang{\nullv}{\R^2_+}=
  \{v\in\R^2_+:\ v_2=v_1\}\subsetneqq \R^2_+=\Tang{\nullv}{\Equi}.
$$
\end{example}

The above example motivates the interest in outer approximations
of $\Equi$. Below, a result in this direction is presented.

\begin{theorem}[Outer approximation]   \label{thm:outapprox}
With reference to a problem $(\VEP)$, let $\bar x\in\Equi$.
Suppose that:

\begin{itemize}

\item[(i)] $f$ is strictly $B$-differentiable at $\bar x$, uniformly
on $K$, with $\{\Bder{f}{\bar x,z}:\ z\in K\}$;

\item[(ii)] the family of mappings $\{\Bder{f}{\bar x,z}:\ z\in K\}$ is
equicontinuous at each point of $\R^n$.

\end{itemize}
Then, it holds
\begin{equation}    \label{in:outapproxTangEqui}
  \Tang{\bar x}{\Equi}\subseteq\bigcap_{z\in K}
  \Bder{f}{\bar x,z}^{-1}(\Tang{f(\bar x,z)}{C})
  \cap\Tang{\bar x}{K}.
\end{equation}
\end{theorem}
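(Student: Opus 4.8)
The plan is to take an arbitrary $v\in\Tang{\bar x}{\Equi}$ and verify the two memberships defining the right-hand side of $(\ref{in:outapproxTangEqui})$ separately. The inclusion $v\in\Tang{\bar x}{K}$ is immediate: since $\Equi\subseteq K$, the very definition of the contingent cone shows that $\Tang{\bar x}{\Equi}\subseteq\Tang{\bar x}{K}$, so nothing needs to be done there. For the remaining part I would invoke the sequential description of the contingent cone to produce sequences $v_n\to v$ and $t_n\downarrow 0$ with $\bar x+t_nv_n\in\Equi$ for every $n\in\N$. By the definition of $\Equi$ this means $f(\bar x+t_nv_n,z)\in C$ for all $z\in K$, while $\bar x\in\Equi$ gives $f(\bar x,z)\in C$; these two facts, fed into a first-order expansion, are the whole engine of the proof.

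Fix $z\in K$. Using that hypothesis (i) forces, in particular, $B$-differentiability of $f$ at $\bar x$ uniformly on $K$, I would write
\[
  f(\bar x+t_nv_n,z)-f(\bar x,z)=t_n\Bder{f}{\bar x,z}(v_n)+R_n(z),
\]
where the positive homogeneity of $\Bder{f}{\bar x,z}\in\PH(\R^n,\R^m)$ has been used to factor out $t_n$, and where $\|R_n(z)\|/t_n\to 0$ because $\|(\bar x+t_nv_n)-\bar x\|=t_n\|v_n\|$ with $v_n\to v$. Since $f(\bar x+t_nv_n,z)\in C$ and $f(\bar x,z)\in C$, dividing the displayed identity by $t_n>0$ yields
\[
  w_n:=\Bder{f}{\bar x,z}(v_n)+\frac{R_n(z)}{t_n}\in\frac{1}{t_n}\bigl(C-f(\bar x,z)\bigr)\subseteq\cone\bigl(C-f(\bar x,z)\bigr).
\]

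It then remains to pass to the limit. Continuity of $\Bder{f}{\bar x,z}$ (guaranteed by membership in $\PH(\R^n,\R^m)$, and available uniformly in $z$ through the equicontinuity of hypothesis (ii)) gives $\Bder{f}{\bar x,z}(v_n)\to\Bder{f}{\bar x,z}(v)$, hence $w_n\to\Bder{f}{\bar x,z}(v)$. Because $C$ is convex and $f(\bar x,z)\in C$, the representation $(\ref{eq:convexWTangcone})$ applied to $C$ at $f(\bar x,z)$ gives $\Tang{f(\bar x,z)}{C}=\cl\bigl(\cone(C-f(\bar x,z))\bigr)$, a closed set containing every $w_n$; therefore $\Bder{f}{\bar x,z}(v)\in\Tang{f(\bar x,z)}{C}$, that is, $v\in\Bder{f}{\bar x,z}^{-1}(\Tang{f(\bar x,z)}{C})$. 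As $z\in K$ was arbitrary, this together with $v\in\Tang{\bar x}{K}$ establishes $(\ref{in:outapproxTangEqui})$.

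The step I expect to be delicate is controlling the remainder $R_n(z)/t_n$ in tandem with the moving direction $v_n\to v$: one must simultaneously keep the first-order remainder along the perturbation $\bar x+t_nv_n$ of order $o(t_n)$ and ensure the derivative evaluation is continuous enough for $w_n$ to converge to $\Bder{f}{\bar x,z}(v)$. The uniform (strict) $B$-differentiability in (i) and the equicontinuity of $\{\Bder{f}{\bar x,z}:\ z\in K\}$ in (ii) are precisely what make both estimates robust, and in fact render the above convergences uniform in $z\in K$.
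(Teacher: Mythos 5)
Your argument is correct and follows essentially the same route as the paper's proof: the sequential characterization of the contingent cone, a first-order expansion of $f(\bar x+t_nv_n,z)$ supplied by the uniform $B$-differentiability, and passage to the limit into the closed cone $\cl(\cone(C-f(\bar x,z)))=\Tang{f(\bar x,z)}{C}$. The only organizational difference is that you argue pointwise in $z$ and take the limit for each fixed $z$ separately (so continuity of the single map $\Bder{f}{\bar x,z}$ already suffices at that step), whereas the paper fixes $\epsilon$ and derives estimates uniform in $z\in K$, which is where its equicontinuity hypothesis (ii) is actually invoked.
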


\begin{proof}
Since it is $\Bder{f}{\bar x,z}\in\PH(\R^n,\R^m)$ for every $z\in K$,
one has
$$
  \Bder{f}{\bar x,z}(\nullv)=\nullv\in\Tang{f(\bar x,z)}{C},
  \quad\forall z\in K.
$$
Therefore, it clearly holds
$$
  \nullv\in\bigcap_{z\in K}
  \Bder{f}{\bar x,z}^{-1}(\Tang{f(\bar x,z)}{C})
  \cap\Tang{\bar x}{K}.
$$
So take an arbitrary $v\in\Tang{\bar x}{\Equi}\backslash\{\nullv\}$.
As all the sets involved in inclusion $(\ref{in:outapproxTangEqui})$
are cones, without loss
of generality it is possible to assume that $\|v\|=1$. According
to the definition of contingent cone, there exist $(v_n)_n$, with
$v_n\longrightarrow v$ and $(t_n)_n$, with $t_n\downarrow 0$,
such that $\bar x+t_nv_n\in\Equi\subseteq K$. Notice that this inclusion
in particular implies that $v\in\Tang{\bar x}{K}$. What remains to
be shown is that
\begin{equation}    \label{in:thesreformoutapprox}
   v\in\bigcap_{z\in K}\Bder{f}{\bar x,z}^{-1}(\Tang{f(\bar x,z)}{C}).
\end{equation}
Fix an arbitrary $\epsilon>0$. By virtue of hypothesis (i), there
exists $\delta_\epsilon>0$ such that
$$
  f(x_1,z)-f(x_2,z)-\Bder{f}{\bar x,z}(x_1-x_2)\in\epsilon
  \|x_1-x_2\|\Uball,\quad\forall z\in K,\ \forall x_1,\, x_2\in
  \ball{\bar x}{\delta_\epsilon}
$$
and hence
\begin{equation}    \label{in:fstrBdifx1x2}
   \Bder{f}{\bar x,z}(x_1-x_2)\in f(x_1,z)-f(x_2,z)+\epsilon
  \|x_1-x_2\|\Uball,\quad\forall z\in K,\ \forall x_1,\, x_2\in
  \ball{\bar x}{\delta_\epsilon}.
\end{equation}
Since it is $\bar x+t_nv_n\longrightarrow\bar x$ as $n\to\infty$
(as a converging sequence $(v_n)_n$ must be bounded), for some $n_\epsilon\in\N$
it is true that $\bar x+t_nv_n\in\ball{\bar x}{\delta_\epsilon}$
for every $n\ge n_\epsilon$.
Thus, by taking $x_1=\bar x+t_nv_n$ and $x_2=\bar x$ in $(\ref{in:fstrBdifx1x2})$,
one finds
$$
   t_n\Bder{f}{\bar x,z}(v_n)\in f(\bar x+t_nv_n,z)-f(\bar x,z)+
   \epsilon t_n\|v_n\|\Uball,\quad\forall z\in K,\ \forall n\ge n_\epsilon,
$$
whence it follows
$$
   \Bder{f}{\bar x,z}(v_n)\in {f(\bar x+t_nv_n,z)-f(\bar x,z)\over t_n}
   +\epsilon\|v_n\|\Uball,\quad\forall z\in K,\ \forall n\ge n_\epsilon.
$$
By taking into account that $v_n\longrightarrow v$ as $n\to\infty$ and $\|v\|=1$,
one has that $\|v_n\|\le 2$ for all $n\ge n_\epsilon$, up to a proper increase
in the value of $n_\epsilon$, if needed. Thus, from the last inclusion
one obtains
\begin{equation}    \label{in:Bdervnincrrepeps}
   \Bder{f}{\bar x,z}(v_n)\in {f(\bar x+t_nv_n,z)-f(\bar x,z)\over t_n}
   +2\epsilon\Uball,\quad\forall z\in K,\ \forall n\ge n_\epsilon.
\end{equation}
By hypothesis (ii) the family $\{\Bder{f}{\bar x,z}:\ z\in K\}$ is equicontinuous
at $v$. This means that there exists $n_*\in\N$ (independent of $z$),
with $n_*\ge n_\epsilon$, such that
$$
   \|\Bder{f}{\bar x,z}(v_n)-\Bder{f}{\bar x,z}(v)\|\le\epsilon,
   \quad\forall z\in K,\ \forall n\ge n_*,
$$
or, equivalently,
$$
  \Bder{f}{\bar x,z}(v)\in\Bder{f}{\bar x,z}(v_n)+\epsilon\Uball,
  \quad\forall z\in K,\ \forall n\ge n_*.
$$
By recalling $(\ref{in:Bdervnincrrepeps})$, from the last inclusion
one gets
$$
  \Bder{f}{\bar x,z}(v)\in {f(\bar x+t_nv_n,z)-f(\bar x,z)\over t_n}
   +3\epsilon\Uball,\quad\forall z\in K,\ \forall n\ge n_*.
$$
Since it is $\bar x+t_nv_n\in\Equi$ for every $n\in\N$, this implies
$$
  \Bder{f}{\bar x,z}(v)\in {C-f(\bar x,z)\over t_n}
   +3\epsilon\Uball\in \cone(C-f(\bar x,z))+3\epsilon\Uball,
   \quad\forall z\in K,\ \forall n\ge n_*.
$$
Since $C$ is convex so $\Tang{f(\bar x,z)}{C}=\cl\cone(C-f(\bar x,z)))$,
it results in
$$
  \Bder{f}{\bar x,z}(v)\in \Tang{f(\bar x,z)}{C}
   +3\epsilon\Uball,\quad\forall z\in K.
$$
The arbitrariness of $\epsilon$ and the fact $\Tang{f(\bar x,z)}{C}$ is
closed allow one to assert that
$$
  \Bder{f}{\bar x,z}(v)\in \Tang{f(\bar x,z)}{C},\quad\forall z\in K,
$$
which proves the validity of $(\ref{in:thesreformoutapprox})$.
Thus the proof is complete.
\end{proof}

\begin{remark}     \label{rem:refthmoutapprox}
(i) In the case in which $\inte C\ne\varnothing$, it is useful to remark
that the formula in $(\ref{in:outapproxTangEqui})$ can be equivalently rewritten
as
$$
  \Tang{\bar x}{\Equi}\subseteq \{\nullv\}\cup
  \left(\bigcap_{z\in K\cap f^{-1}(\bar x,\cdot)(\bd C)}
  \Bder{f}{\bar x,z}^{-1}(\Tang{f(\bar x,z)}{C})
  \cap\Tang{\bar x}{K}\right),
$$
with the convention that an intersection over an empty index set is the empty set.
Indeed, whenever it happens $f(\bar x,z)\in\inte C$, one has $\Tang{f(\bar x,z)}{C}=\R^m$,
with the consequence that $\Bder{f}{\bar x,z}^{-1}(\Tang{f(\bar x,z)}{C})=
\R^n$.

(ii) It is worth noticing that for all those $z_0\in K$ such that
$f(\bar x,z_0)=\nullv$ (if any), the formula in $(\ref{in:outapproxTangEqui})$
entails
$$
  \Tang{\bar x}{\Equi}\subseteq\Bder{f}{\bar x,z_0}^{-1}(C)\cap\Tang{\bar x}{K},
$$
as it is $\Tang{f(\bar x,z_0)}{C}=\Tang{\nullv}{C}=C$.
\end{remark}

The next example shows that also the outer approximation of $\Tang{\bar x}{\Equi}$
provided by Theorem \ref{thm:outapprox} may happen to be rather rough.

\begin{example}[Inclusion $(\ref{in:outapproxTangEqui})$ may be strict]
Consider the (actually scalar) problem $(\VEP)$ defined by the following data:
$K=\R$, $C=[0,+\infty)$, $f:\R\times\R\longrightarrow\R$  given by
$$
  f(x,z)={x^2z\over z^2+1}.
$$
It is clear that $\Equi=\{0\}$. So, fix $\bar x=0$.
In order for checking that $f$ is strictly $B$-differentiable at $0$ uniformly
on $\R$, with $\{\Bder{f}{0,z}\equiv 0,\ z\in\R\}$, according to the
definition it suffices to observe that, fixed an arbitrary $\epsilon>0$,
one has
\begin{eqnarray*}
  \sup_{z\in\R}{|f(x_1,z)-f(x_2,z)|\over |x_1-x_2|} &=&
  \sup_{z\in\R}{\displaystyle{\left|{x_1^2z\over z^2+1}-{x_2^2z\over z^2+1}\right|}\over |x_1-x_2|} =
  \sup_{z\in\R}{|z|\over z^2+1}\cdot |x_1+x_2|\le |x_1|+|x_2| \\
  &\le & \epsilon,\quad\forall x_1,\, x_2\in\ball{0}{\epsilon/2},\ x_1\ne x_2.
\end{eqnarray*}
As the family $\{\Bder{f}{0,z}\equiv 0,\ z\in\R\}$ is actually independent of $z\in\R$,
also hypothesis (ii) of Theorem \ref{thm:outapprox} is satisfied.

Since $f(0,z)=0$ for every $z\in\R$, so it is $\Tang{f(0,z)}{[0,+\infty)}=[0,+\infty)$,
one finds
$$
  \Bder{f}{0,z}^{-1}\left(\Tang{f(0,z)}{[0,+\infty)}\right)=\R,\quad\forall z\in\R.
$$
Consequently, in the current case, one obtains
$$
  \Tang{0}{\Equi}=\{0\}\subsetneqq\R\cap\R=\bigcap_{z\in\R}
  \Bder{f}{0,z}^{-1}(\Tang{f(0,z)}{[0,+\infty)})
  \cap\Tang{0}{\R}.
$$
\end{example}

Relying on both the preceding approximations, the next result singles
out a sufficient condition, upon which one can establish an exact
representation of $\Tang{\bar x}{\Equi}$.

\begin{corollary}
With reference to a problem $(\VEP)$, let $\bar x\in\Equi$.
Suppose that:

\begin{itemize}

\item[(i)] $K$ is polyhedral;

\item[(ii)] $f(\bar x,z)=\nullv,\quad\forall z\in K$;

\item[(iii)] $f$ is strictly $B$-differentiable at $\bar x$, uniformly
on $K$, with $\{\Bder{f}{\bar x,z}:\ z\in K\}$;

\item[(iv)] the family of mappings $\{\Bder{f}{\bar x,z}:\ z\in K\}$ is
equicontinuous at each point of $\R^n$;

\item[(v)] a local error bound such as in $(\ref{in:erboSE})$ is valid near $\bar x$.

\end{itemize}
Then, it holds
$$
  \Tang{\bar x}{\Equi}=\bigcap_{z\in K}\Bder{f}{\bar x,z}^{-1}(C)
  \cap\Tang{\bar x}{K}.
$$
\end{corollary}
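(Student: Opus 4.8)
The plan is to derive the asserted equality as the conjunction of two opposite inclusions, each one being a specialization of the corresponding theorem of this section, after which the polyhedrality hypothesis (i) and the vanishing hypothesis (ii) are precisely what make the two one-sided estimates coincide. Accordingly, I treat the inclusions ``$\supseteq$'' and ``$\subseteq$'' in turn.

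For ``$\supseteq$'', I would appeal to Theorem \ref{thm:innerapprox}. Its assumption on the error bound is hypothesis (v) here, while its $B$-differentiability assumption follows from the stronger hypothesis (iii): indeed, choosing $x_2=\bar x$ in the inequality defining strict $B$-differentiability uniformly on $K$ yields at once ordinary $B$-differentiability uniformly on $K$, with the same family $\{\Bder{f}{\bar x,z}:\ z\in K\}$. Theorem \ref{thm:innerapprox} then gives $\bigcap_{z\in K}\Bder{f}{\bar x,z}^{-1}(C)\cap\Wang{\bar x}{K}\subseteq\Tang{\bar x}{\Equi}$. Since $K$ is polyhedral by hypothesis (i), Remark \ref{rem:polyWang} ensures $\Wang{\bar x}{K}=\Tang{\bar x}{K}$, so the left-hand member is exactly $\bigcap_{z\in K}\Bder{f}{\bar x,z}^{-1}(C)\cap\Tang{\bar x}{K}$, which is the inclusion sought.

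For ``$\subseteq$'', I would invoke Theorem \ref{thm:outapprox}, whose hypotheses (i) and (ii) are verbatim hypotheses (iii) and (iv) here; it provides $\Tang{\bar x}{\Equi}\subseteq\bigcap_{z\in K}\Bder{f}{\bar x,z}^{-1}(\Tang{f(\bar x,z)}{C})\cap\Tang{\bar x}{K}$. At this point hypothesis (ii) of the corollary enters decisively: since $f(\bar x,z)=\nullv$ for every $z\in K$, one has $\Tang{f(\bar x,z)}{C}=\Tang{\nullv}{C}=\cl\,\cone(C-\nullv)=C$, the last equalities holding because $C$ is a closed convex cone (this is precisely the simplification already recorded in Remark \ref{rem:refthmoutapprox}(ii)). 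Substituting $\Tang{f(\bar x,z)}{C}=C$ for each $z$ turns the outer estimate into $\Tang{\bar x}{\Equi}\subseteq\bigcap_{z\in K}\Bder{f}{\bar x,z}^{-1}(C)\cap\Tang{\bar x}{K}$.

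Intersecting the two inclusions yields the claimed equality. I do not expect any genuine obstacle: the corollary is essentially an assembly of Theorems \ref{thm:innerapprox} and \ref{thm:outapprox}, and the only substantive points are the two elementary reductions $\Wang{\bar x}{K}=\Tang{\bar x}{K}$ (from polyhedrality) and $\Tang{\nullv}{C}=C$ (from $C$ being a closed convex cone), both of which are available above; the single mildly delicate detail is merely noticing that strict $B$-differentiability suffices to feed the hypotheses of Theorem \ref{thm:innerapprox}.
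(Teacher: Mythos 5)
Your proposal is correct and follows essentially the same route as the paper: both inclusions are obtained by specializing Theorems \ref{thm:innerapprox} and \ref{thm:outapprox}, with Remark \ref{rem:polyWang} handling the polyhedrality reduction $\Wang{\bar x}{K}=\Tang{\bar x}{K}$ and Remark \ref{rem:refthmoutapprox}(ii) handling $\Tang{\nullv}{C}=C$. Your explicit observation that strict $B$-differentiability (take $x_2=\bar x$) yields the ordinary uniform $B$-differentiability required by Theorem \ref{thm:innerapprox} is a small detail the paper leaves implicit, but it is exactly right.
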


\begin{proof}
The above assumptions enable one to apply both Theorem \ref{thm:innerapprox}
and Theorem \ref{thm:outapprox}.
From the former one, in the light of Remark \ref{rem:polyWang} and hypothesis (i),
one obtains
\begin{equation}    \label{in:innerTang}
 \bigcap_{z\in K}\Bder{f}{\bar x,z}^{-1}(C)\cap\Tang{\bar x}{K}\subseteq
 \Tang{\bar x}{\Equi}.
\end{equation}
From the latter, in the light of hypothesis (ii) and Remark \ref{rem:refthmoutapprox}(ii),
one obtains
\begin{equation} \label{in:outTang}
  \Tang{\bar x}{\Equi}\subseteq\bigcap_{z\in K}\Bder{f}{\bar x,z}^{-1}(C)
  \cap\Tang{\bar x}{K}.
\end{equation}
By combining inclusions $(\ref{in:innerTang})$ and $(\ref{in:outTang})$ one gets
the equality in the thesis.
\end{proof}

\vskip1cm


\section{Applications to constrained optimization}    \label{Sect:4}

This section deals with first-order optimality conditions for
optimization problems, whose feasible region is formalized as a set of
strong vector equilibria. As such, these problems
can be cast in mathematical programming with equilibrium constraints,
a well-recognized topic and active area of research (see, among others,
\cite{LuPaRa96,LuPaRaWu96,Mord06b,OuKoZo98,Ye05}).
Thus, the optimization problems here considered take the following
form
$$
  \min\vartheta(x) \quad\hbox{ subject to }\quad x\in\Equi,
  \leqno (\MPVEC)
$$
where $\vartheta:\R^n\longrightarrow\R$ is the objective function formalizing
the criterion used for comparing variables,
while $\Equi$ is the feasible region of the problem, denoting as in
the previous sections the solution sets to an inner problem $(\VEP)$.
Throughout this section $\vartheta$ will be assumed to be continuous
around $\bar x$, but possibly nondifferentiable, as well as the bifunction
$f$ defining $(\VEP)$.

In constrained nondifferentiable optimization, first-order optimality conditions are
typically obtained by locally approximating the objective function and the feasible region of
a given problem. In this vein, the fact stated in the next lemma is widely
known to hold, which has been used as a starting point for various, more
elaborated, optimality conditions. For a direct proof see, for instance, \cite[Chapter 7.1]{Schi07}.
To a deeper view, it can be restored as a special case of an
axiomatic scheme of analysis, which was developed in \cite{CasPap95,ElsThi88}
(see \cite[Theorem 2.1]{CasPap95}).

\begin{lemma}
Let $\bar x\in\Equi$ be a local optimal solution to problem $(\MPVEC)$.
Then, it holds
\begin{equation}    \label{in:nocDTr}
   \Duder{\vartheta}{\bar x}{w}\ge 0,\quad\forall w\in\Wang{\bar x}{\Equi}
\end{equation}
and
\begin{equation}     \label{in:nocDHT}
   \DHuder{\vartheta}{\bar x}{w}\ge 0,\quad\forall w\in\Tang{\bar x}{\Equi}.
\end{equation}
\end{lemma}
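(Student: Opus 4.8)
The plan is to derive both inequalities straight from the definition of local optimality, by producing in each case a sequence of feasible points converging to $\bar x$ along which the difference quotient of $\vartheta$ is nonnegative, and then passing to the appropriate upper limit. The reason the radial cone $\Wang{\bar x}{\Equi}$ is paired with the Dini derivative $\Duder{\vartheta}{\bar x}{\cdot}$ while the contingent cone $\Tang{\bar x}{\Equi}$ is paired with the Dini--Hadamard derivative $\DHuder{\vartheta}{\bar x}{\cdot}$ is precisely that each tangent cone supplies directions of the kind over which the corresponding $\limsup$ is computed: fixed directions for the former, varying directions for the latter. Throughout, I would fix once and for all a radius $\delta>0$ furnished by local optimality, so that $\vartheta(x)\ge\vartheta(\bar x)$ for every $x\in\ball{\bar x}{\delta}\cap\Equi$; continuity of $\vartheta$ near $\bar x$ guarantees that all difference quotients below are well defined and finite.

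To establish $(\ref{in:nocDTr})$, I would fix $w\in\Wang{\bar x}{\Equi}$ and, applying the definition of the radial cone with $\epsilon=1/n$, extract $t_n\in(0,1/n)$ such that $\bar x+t_nw\in\Equi$. Then $t_n\downarrow 0$ and the feasible points $\bar x+t_nw$ tend to $\bar x$, so for $n$ large they lie in $\ball{\bar x}{\delta}\cap\Equi$ and hence $\vartheta(\bar x+t_nw)-\vartheta(\bar x)\ge 0$. Dividing by $t_n>0$ makes the difference quotient nonnegative for all large $n$. Since $\Duder{\vartheta}{\bar x}{w}$ is the $\limsup$ as $t\downarrow 0$ of these quotients over all $t$, it dominates the upper limit taken along the particular subsequence $(t_n)_n$, which is nonnegative; inequality $(\ref{in:nocDTr})$ follows.

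The argument for $(\ref{in:nocDHT})$ is parallel. I would fix $w\in\Tang{\bar x}{\Equi}$ and, from the definition of the contingent cone, obtain sequences $v_n\to w$ and $t_n\downarrow 0$ with $\bar x+t_nv_n\in\Equi$. As a convergent sequence $(v_n)_n$ is bounded, whence $\bar x+t_nv_n\to\bar x$, and the same local-optimality reasoning yields $(\vartheta(\bar x+t_nv_n)-\vartheta(\bar x))/t_n\ge 0$ for all large $n$. Now $\DHuder{\vartheta}{\bar x}{w}$ is exactly the $\limsup$ over pairs $(u,t)$ with $u\to w$ and $t\downarrow 0$; the admissible sequence $(v_n,t_n)_n$ sits inside this limiting process, so the $\limsup$ is at least the upper limit along it, again nonnegative. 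This gives $(\ref{in:nocDHT})$ and completes the argument.

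No step presents a genuine obstacle: the only point deserving care is the direction of the comparison between a full $\limsup$ and the limit along one chosen sequence. Since the full $\limsup$ always dominates every subsequential limit, it suffices to exhibit a single admissible sequence of nonnegative quotients to force nonnegativity of the derivative; there is no need to control the behaviour of $\vartheta$ along directions other than the ones supplied by the respective tangent cone.
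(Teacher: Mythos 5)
Your proof is correct and is precisely the standard direct argument that the paper itself omits, deferring instead to \cite[Chapter 7.1]{Schi07}: in each case you exhibit an admissible sequence of feasible points near $\bar x$ with nonnegative difference quotients and use that the full $\limsup$ dominates the limit along that sequence. The pairing you highlight (radial cone with the Dini derivative, contingent cone with the Dini--Hadamard derivative) is exactly the point of the lemma, so nothing further is needed.
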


\begin{remark}
Since from their very definition one sees that
$$
  \Duder{\vartheta}{\bar x}{w}\le\DHuder{\vartheta}{\bar x}{w},
  \quad\forall w\in\R^n,
$$
whereas it is $\Wang{\bar x}{\Equi}\subseteq\Tang{\bar x}{\Equi}$,
none of the conditions $(\ref{in:nocDTr})$ and $(\ref{in:nocDHT})$ can imply
in general the other, unless $\vartheta$ is locally Lipschitz near $\bar x$
or it is $\Wang{\bar x}{\Equi}=\Tang{\bar x}{\Equi}$. Thus, the author does
not agree with what asserted in \cite[pag. 132]{Schi07}.
For the purposes of the present analysis, only the
condition in $(\ref{in:nocDHT})$  will be actually exploited.
\end{remark}

\begin{theorem}[Necessary optimality condition]    \label{thm:NOCMPVEC}
Let $\bar x\in\Equi$ be a local optimal solution to problem $(\MPVEC)$.
Suppose that:
\begin{itemize}

\item[(i)] $f$ is $B$-differentiable at $\bar x$, uniformly on $K$,
with $\{\Bder{f}{\bar x,z}:\ z\in K\}$;

\item[(ii)] a local error bound such as in $(\ref{in:erboSE})$
is valid near $\bar x$.

\end{itemize}
Then, it holds
\begin{equation}    \label{in:NOCMPVEC}
   -\Upsubd\vartheta(\bar x)\subseteq \dcone{\left(
   \bigcap_{z\in K}\Bder{f}{\bar x,z}^{-1}(C)\cap\Wang{\bar x}{K}\right)} .
\end{equation}
\end{theorem}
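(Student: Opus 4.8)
The plan is to combine the necessary optimality condition (\ref{in:nocDHT}) furnished by the preceding Lemma with the inner approximation established in Theorem \ref{thm:innerapprox}, bridging the two via the Dini--Hadamard characterization (\ref{eq:UpsubdDHder}) of the upper subdifferential. Concretely, I would fix an arbitrary $v\in\Upsubd\vartheta(\bar x)$ and set $A:=\bigcap_{z\in K}\Bder{f}{\bar x,z}^{-1}(C)\cap\Wang{\bar x}{K}$; since by definition $\dcone{A}=\{u\in\R^n:\ \langle u,w\rangle\le 0,\ \forall w\in A\}$, proving the membership $-v\in\dcone{A}$ reduces to establishing that $\langle v,w\rangle\ge 0$ for every $w\in A$.

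The two ingredients are as follows. Because $\vartheta$ is assumed continuous (hence u.s.c.) around $\bar x$, the representation (\ref{eq:UpsubdDHder}) applies and yields $\langle v,w\rangle\ge\DHuder{\vartheta}{\bar x}{w}$ for all $w\in\R^n$. Meanwhile, as $\bar x$ is a local optimal solution to $(\MPVEC)$, the Lemma gives $\DHuder{\vartheta}{\bar x}{w}\ge 0$ for every $w\in\Tang{\bar x}{\Equi}$. Chaining these two inequalities shows that $\langle v,w\rangle\ge 0$ for every $w\in\Tang{\bar x}{\Equi}$.

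To finish, I would observe that hypotheses (i) and (ii) of the present statement coincide with those of Theorem \ref{thm:innerapprox}, whose conclusion (\ref{in:inapproxTangEqui}) is exactly the inclusion $A\subseteq\Tang{\bar x}{\Equi}$. Consequently every $w\in A$ lies in $\Tang{\bar x}{\Equi}$ and thus satisfies $\langle v,w\rangle\ge 0$, i.e. $\langle -v,w\rangle\le 0$; hence $-v\in\dcone{A}$. Since $v\in\Upsubd\vartheta(\bar x)$ was arbitrary, the desired inclusion (\ref{in:NOCMPVEC}) follows.

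The argument is in essence an assembly of three previously established facts, so no single computational obstacle arises. The one point requiring care is the legitimacy of invoking the representation (\ref{eq:UpsubdDHder}): it rests on upper semicontinuity of $\vartheta$, available here solely because the standing assumption of the section makes $\vartheta$ continuous around $\bar x$. Absent this hypothesis the first link of the chain --- converting membership in $\Upsubd\vartheta(\bar x)$ into the directional inequality $\langle v,w\rangle\ge\DHuder{\vartheta}{\bar x}{w}$ --- would fail, so it is worth flagging explicitly.
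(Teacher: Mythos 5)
Your proposal is correct and follows essentially the same route as the paper's own proof: invoke Theorem \ref{thm:innerapprox} to place the cone $\bigcap_{z\in K}\Bder{f}{\bar x,z}^{-1}(C)\cap\Wang{\bar x}{K}$ inside $\Tang{\bar x}{\Equi}$, apply the necessary condition $(\ref{in:nocDHT})$, and pass to the polar via the representation $(\ref{eq:UpsubdDHder})$, whose reliance on upper semicontinuity you rightly flag. The only cosmetic difference is that the paper dispatches the case $\Upsubd\vartheta(\bar x)=\varnothing$ explicitly, which your ``arbitrary $v$'' phrasing handles vacuously.
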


\begin{proof}
Under the above assumptions, by Theorem \ref{thm:innerapprox} the inclusion
in $(\ref{in:inapproxTangEqui})$ holds true. Consequently, since
$\bar x\in\Equi$ is a local optimal solution to $(\MPVEC)$,
according to condition $(\ref{in:nocDHT})$ it must be
$$
  \DHuder{\vartheta}{\bar x}{w}\ge 0,\quad\forall w\in
  \bigcap_{z\in K}\Bder{f}{\bar x,z}^{-1}(C)\cap\Wang{\bar x}{K}.
$$
If $\Upsubd\vartheta(\bar x)=\varnothing$ the thesis becomes trivial.
Otherwise, by taking into account the representation in $(\ref{eq:UpsubdDHder})$,
which is valid because the function $\vartheta$ is in particular
u.s.c. around $\bar x$, for an arbitrary $v\in\Upsubd\vartheta(\bar x)$
one finds
$$
  \langle v,w\rangle\ge 0,\quad\forall w\in
  \bigcap_{z\in K}\Bder{f}{\bar x,z}^{-1}(C)\cap\Wang{\bar x}{K},
$$
which amounts to say that
$$
  -v\in\dcone{\left(\bigcap_{z\in K}\Bder{f}{\bar x,z}^{-1}(C)
  \cap\Wang{\bar x}{K}\right)}.
$$
The arbitrariness of $v\in\Upsubd\vartheta(\bar x)$ completes
the proof.
\end{proof}

\begin{remark}
To assess the role of the optimality condition formulated in Theorem \ref{thm:NOCMPVEC},
notice that it does not carry useful information whenever $\Fsubd\vartheta(\bar x)=\varnothing$.
This happens, for example, if $\vartheta$ is a convex continuous function,
which is nondifferentiable at $\bar x$. Nevertheless, the upper subdifferential
is nonempty for large classes of functions, including the class of semiconcave
ones (see \cite{Mord06b}). In all such cases, condition $(\ref{in:NOCMPVEC})$
provides a necessary optimality condition, which may be more efficient than
those expressed in terms of more traditional lower subdifferentials. This because it requires
that all elements in $ -\Upsubd\vartheta(\bar x)$ belong to the set in the right-side
of $(\ref{in:NOCMPVEC})$, in contrast to a mere nonempty intersection requirement,
which is typical for the lower subdifferential case.
\end{remark}

\begin{corollary}
Under the same assumptions of Theorem \ref{thm:NOCMPVEC}, if the following
additional hypotheses are satisfied:

\begin{itemize}

\item[(i)] $K$ is polyhedral;

\item[(ii)] $\Bder{f}{\bar x,z}\in\PH(\R^n,\R^m)$ is $C$-concave
for every $z\in K$;

\item[(iii)] the qualification condition holds
\begin{equation}   \label{in:qcnocMPVEC}
  \bigcap_{z\in K}\Bder{f}{\bar x,z}^{-1}(C)
  \cap
  \inte\Tang{\bar x}{K}\ne\varnothing,
\end{equation}

\end{itemize}
then the inclusion in $(\ref{in:NOCMPVEC})$ takes the
simpler form
$$
   -\Upsubd\vartheta(\bar x)\subseteq \dcone{\left(
   \bigcap_{z\in K}\Bder{f}{\bar x,z}^{-1}(C)\right)}
   +\Ncone{\bar x}{K}.
$$
\end{corollary}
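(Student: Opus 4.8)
The plan is to invoke Theorem \ref{thm:NOCMPVEC} and then to simplify the polar cone on its right-hand side by means of convex-cone polarity calculus, the three extra hypotheses being exactly what makes this calculus applicable. Writing $A=\bigcap_{z\in K}\Bder{f}{\bar x,z}^{-1}(C)$, Theorem \ref{thm:NOCMPVEC} already delivers $-\Upsubd\vartheta(\bar x)\subseteq\dcone{(A\cap\Wang{\bar x}{K})}$, so the whole task reduces to proving the identity $\dcone{(A\cap\Wang{\bar x}{K})}=\dcone{A}+\Ncone{\bar x}{K}$.

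First I would record the structural consequences of the hypotheses. By hypothesis (i) together with Remark \ref{rem:polyWang}, one has $\Wang{\bar x}{K}=\Tang{\bar x}{K}$, and this common cone is closed, convex (indeed polyhedral); I denote it $B$. By hypothesis (ii) each $\Bder{f}{\bar x,z}$ is continuous, positively homogeneous and $C$-concave, and from this I would check that every $\Bder{f}{\bar x,z}^{-1}(C)$ is a closed convex cone: positive homogeneity gives the cone property, closedness comes from continuity of $\Bder{f}{\bar x,z}$ and closedness of $C$, while if $v_1,v_2\in\Bder{f}{\bar x,z}^{-1}(C)$ then $C$-concavity yields $\Bder{f}{\bar x,z}(v_1+v_2)=\Bder{f}{\bar x,z}(v_1)+\Bder{f}{\bar x,z}(v_2)+c$ for some $c\in C$, whose three summands all lie in $C$, so that $v_1+v_2\in\Bder{f}{\bar x,z}^{-1}(C)$. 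Consequently $A$, being an intersection of closed convex cones, is itself a closed convex cone.

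The decisive step is the polarity identity $\dcone{(A\cap B)}=\dcone{A}+\dcone{B}$. In general only $\dcone{(A\cap B)}=\cl(\dcone{A}+\dcone{B})$ is available, so the real content is the closedness of $\dcone{A}+\dcone{B}$, and this is where the qualification condition (iii) enters. Choosing $\bar v\in A\cap\inte B$ furnished by $(\ref{in:qcnocMPVEC})$, I would argue directly: for any sequence $a_n+b_n\to w$ with $a_n\in\dcone{A}$ and $b_n\in\dcone{B}$, the inclusion $\bar v\in\inte B$ provides $\rho>0$ with $\bar v+\rho\Uball\subseteq B$, whence testing $b_n$ against $\bar v+\rho b_n/\|b_n\|$ gives $\langle b_n,\bar v\rangle\le-\rho\|b_n\|$; since also $\langle a_n,\bar v\rangle\le 0$ because $\bar v\in A$, one obtains $\langle a_n+b_n,\bar v\rangle\le-\rho\|b_n\|$, so that $\rho\|b_n\|\le-\langle a_n+b_n,\bar v\rangle$, and the convergence of $(a_n+b_n)_n$ forces $(b_n)_n$, and hence $(a_n)_n$, to be bounded. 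Extracting convergent subsequences and using the closedness of $\dcone{A}$ and $\dcone{B}$ shows $w\in\dcone{A}+\dcone{B}$, so the sum is closed and the closure may be removed.

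Finally, since $K$ is convex, the polar of its tangent cone is the normal cone, that is $\dcone{B}=\dcone{\Tang{\bar x}{K}}=\Ncone{\bar x}{K}$. Chaining this with the polarity identity and the inclusion from Theorem \ref{thm:NOCMPVEC} produces $-\Upsubd\vartheta(\bar x)\subseteq\dcone{A}+\Ncone{\bar x}{K}$, which is the thesis. I expect the boundedness argument underlying the removal of the closure to be the only genuinely delicate point; by contrast, the role of polyhedrality (identifying $\Wang{\bar x}{K}$ with a closed convex cone whose polar is $\Ncone{\bar x}{K}$) and that of $C$-concavity (convexity of $A$) are comparatively routine verifications.
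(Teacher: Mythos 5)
Your proof is correct and follows essentially the same route as the paper: reduce to the polar-cone identity $\dcone{\left(A\cap\Tang{\bar x}{K}\right)}=\dcone{A}+\Ncone{\bar x}{K}$ with $A=\bigcap_{z\in K}\Bder{f}{\bar x,z}^{-1}(C)$, using polyhedrality to replace $\Wang{\bar x}{K}$ by the closed convex cone $\Tang{\bar x}{K}$ and $C$-concavity to make $A$ a closed convex cone, then identify $\dcone{\Tang{\bar x}{K}}$ with $\Ncone{\bar x}{K}$. The only deviation is that where the paper cites textbook lemmas (the qualification condition yields $S_1-S_2=\R^n$, which yields closedness of $\dcone{S_1}+\dcone{S_2}$), you establish the closedness of the sum directly via the boundedness argument based on $\bar v\in A\cap\inte\Tang{\bar x}{K}$, which is a correct, self-contained substitute.
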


\begin{proof}
It is well know that if $S_1$ and $S_2$ are closed convex cones,
then $\dcone{(S_1\cap S_2)}=\cl(\dcone{S_1}+\dcone{S_2})$ (see
\cite[Lemma 2.4.1]{Schi07}). On the other hand, if $S_1-S_2=\R^n$,
then $\dcone{S_1}+\dcone{S_2}$ is closed (see \cite[Proposition 2.4.3]{Schi07}
If the qualification condition $S_1\cap\inte S_2\ne\varnothing$ happens
to be satisfied, then $S_1-S_2=\R^n$ (see \cite[Lemma 2.4.4]{Schi07}). Thus, since
$\bigcap_{z\in K}\Bder{f}{\bar x,z}^{-1}(C)$ and $\Tang{\bar x}{K}$
are closed convex cone, by virtue of $(\ref{in:qcnocMPVEC})$ and
the assumption (i), one obtains
$$
  \dcone{\left(\bigcap_{z\in K}\Bder{f}{\bar x,z}^{-1}(C)
  \cap\Wang{\bar x}{K}\right)}=\dcone{\left(\bigcap_{z\in K}\Bder{f}{\bar x,z}^{-1}(C)\right)}
  +\dcone{\Tang{\bar x}{K}}.
$$
Then, in order to achieve the inclusion in the thesis
it suffices to recall that $\dcone{\Tang{\bar x}{K}}=
\Ncone{\bar x}{K}$ (see \cite[Lemma 11.2.2]{Schi07}).
\end{proof}

Now, let us consider sufficient optimality conditions, a topic usually
investigated in a subsequent step of analysis.

The next lemma provides a sufficient optimality condition for
$(\MPVEC)$ in the case the objective function is locally Lipschitz.
For its proof see \cite[Lemma 1.3, Chapter V]{DemRub95}. Notice that for
the statement of Lemma \ref{lem:SOCMPVEC}, the hypothesis
on the feasible region of the problem to allow a first-order uniform
conical approximation in the sense of Demyanov-Rubinov is not needed
(see \cite[Remark 1.6, Chapter V]{DemRub95}).

\begin{lemma}    \label{lem:SOCMPVEC}
With reference to $(\MPVEC)$, suppose that $\vartheta$ is locally
Lipschitz around $\bar x\in\Equi$. If it holds
\begin{equation}     \label{in:socDlolem}
  \Dlder{\vartheta}{\bar x}{w}>0,\quad\forall w\in
  \Tang{\bar x}{\Equi}\backslash\{\nullv\},
\end{equation}
then $\bar x$ is a strict local solution to $(\MPVEC)$.
\end{lemma}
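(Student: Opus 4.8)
The plan is to argue by contradiction, exploiting the local Lipschitz property of $\vartheta$ to pass from a sequence of feasible points violating strict optimality to a tangent direction along which the Dini lower derivative is forced to be nonpositive, contradicting $(\ref{in:socDlolem})$.

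First I would suppose, for contradiction, that $\bar x$ is \emph{not} a strict local solution to $(\MPVEC)$. Then there exists a sequence $(x_k)_k$ in $\Equi$, with $x_k\to\bar x$ and $x_k\ne\bar x$, such that $\vartheta(x_k)\le\vartheta(\bar x)$ for every $k$. Setting $t_k=\|x_k-\bar x\|\downarrow 0$ and $u_k=(x_k-\bar x)/t_k\in\Usfer$, so that $x_k=\bar x+t_ku_k$, I would invoke the compactness of the unit sphere to extract a subsequence (not relabeled) along which $u_k\to w$ for some $w\in\Usfer$. Since $\bar x+t_ku_k=x_k\in\Equi$ with $t_k\downarrow 0$ and $u_k\to w$, the very definition of the contingent cone yields $w\in\Tang{\bar x}{\Equi}\backslash\{\nullv\}$, so hypothesis $(\ref{in:socDlolem})$ applies and gives $\Dlder{\vartheta}{\bar x}{w}>0$.

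The next step is to derive the contradiction by showing $\Dlder{\vartheta}{\bar x}{w}\le 0$. Here the local Lipschitz assumption is essential: letting $L$ denote a Lipschitz constant of $\vartheta$ near $\bar x$, for $k$ large enough one has $|\vartheta(\bar x+t_kw)-\vartheta(\bar x+t_ku_k)|\le L\,t_k\|w-u_k\|$, whence
\begin{equation*}
  {\vartheta(\bar x+t_kw)-\vartheta(\bar x)\over t_k}\le
  {\vartheta(x_k)-\vartheta(\bar x)\over t_k}+L\|w-u_k\|\le L\|w-u_k\|,
\end{equation*}
the last inequality using $\vartheta(x_k)\le\vartheta(\bar x)$. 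Letting $k\to\infty$ and recalling $u_k\to w$, the right-hand side tends to $0$, so $\liminf_{k\to\infty}(\vartheta(\bar x+t_kw)-\vartheta(\bar x))/t_k\le 0$. Since the liminf defining $\Dlder{\vartheta}{\bar x}{w}$ is taken over \emph{all} $t\downarrow 0$, it is dominated by the liminf along the particular sequence $(t_k)_k$, giving $\Dlder{\vartheta}{\bar x}{w}\le 0$, a contradiction. Hence $\bar x$ must be a strict local solution.

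I expect the only delicate point to be the passage from the varying directions $u_k$ to the fixed direction $w$: the hypothesis is formulated in terms of the Dini (radial) derivative $\Dlder{\vartheta}{\bar x}{\cdot}$ rather than the Dini--Hadamard one, and it is precisely the Lipschitz estimate that bridges this gap, controlling the defect $L\|w-u_k\|$ in the difference quotient uniformly as $t_k\downarrow 0$. Without local Lipschitzness one could only control the Dini--Hadamard lower derivative $\DHlder{\vartheta}{\bar x}{w}$, which already accommodates varying directions, so the stated condition phrased via $\Dlder{\vartheta}{\bar x}{\cdot}$ would not suffice; this is exactly why the Lipschitz regularity of $\vartheta$ appears in the statement.
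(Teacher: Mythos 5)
Your proof is correct. The paper does not supply its own argument for this lemma (it refers to \cite[Lemma 1.3, Chapter V]{DemRub95}), and your contradiction argument --- extracting a unit tangent direction $w$ from a minimizing-violating sequence via compactness of $\Usfer$, then using the Lipschitz constant to transfer the nonpositive difference quotients from the varying directions $u_k$ to the fixed direction $w$ --- is exactly the standard proof of this fact, with the role of local Lipschitz continuity correctly identified as bridging the gap between the Dini and Dini--Hadamard lower derivatives.
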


On the base of the above lemma, one is in a position to establish the next result.

\begin{theorem}[Sufficient optimality condition]     \label{thm:SOCMPVEC}
With reference to $(\MPVEC)$, assume that $\vartheta$ is locally
Lipschitz around $\bar x\in\Equi$. Suppose that:

\begin{itemize}

\item[(i)] $f$ is strictly $B$-differentiable at $\bar x$, uniformly
on $K$, with $\{\Bder{f}{\bar x,z}:\ z\in K\}$;

\item[(ii)] the family of mappings $\{\Bder{f}{\bar x,z}:\ z\in K\}$ is
equicontinuous at each point of $\R^n$.

\end{itemize}
If the condition
\begin{equation}     \label{in:socDlo}
  \nullv\in\Fsubd\vartheta(\bar x)+\inte\left[\dcone{\left(\bigcap_{z\in K}
  \Bder{f}{\bar x,z}^{-1}(\Tang{f(\bar x,z)}{C})\cap\Tang{\bar x}{K}\right)}
  \right],
\end{equation}
is satisfied, then $\bar x$ is a strict local solution to $(\MPVEC)$.
\end{theorem}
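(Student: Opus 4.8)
The plan is to deduce the conclusion from Lemma \ref{lem:SOCMPVEC} by verifying its hypothesis \eqref{in:socDlolem}. That is, I would show that condition \eqref{in:socDlo} forces $\Dlder{\vartheta}{\bar x}{w}>0$ for every $w\in\Tang{\bar x}{\Equi}\backslash\{\nullv\}$; once this is in hand, Lemma \ref{lem:SOCMPVEC} applies immediately (its standing Lipschitz assumption on $\vartheta$ is already granted) and delivers the strict local optimality of $\bar x$. The whole burden of the proof thus reduces to a directional-derivative estimate.

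First I would unpack condition \eqref{in:socDlo}. Write $\Lambda=\bigcap_{z\in K}\Bder{f}{\bar x,z}^{-1}(\Tang{f(\bar x,z)}{C})\cap\Tang{\bar x}{K}$, so that \eqref{in:socDlo} reads $\nullv\in\Fsubd\vartheta(\bar x)+\inte\dcone{\Lambda}$. This means there exist $v\in\Fsubd\vartheta(\bar x)$ and $p\in\inte\dcone{\Lambda}$ with $v=-p$, hence $-v\in\inte\dcone{\Lambda}$. The point of membership in the \emph{interior} of the polar cone is that it yields a strict separation: for some $\rho>0$ one has $\ball{-v}{\rho}\subseteq\dcone{\Lambda}$, which translates into a uniform strict inequality $\langle -v,w\rangle\le -\rho\|w\|$ for every $w\in\Lambda$, equivalently $\langle v,w\rangle\ge\rho\|w\|$ for all $w\in\Lambda$. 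This is the key quantitative consequence I would extract.

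Next I would bring in the outer approximation. Hypotheses (i) and (ii) are exactly those of Theorem \ref{thm:outapprox}, so \eqref{in:outapproxTangEqui} gives $\Tang{\bar x}{\Equi}\subseteq\Lambda$. Therefore every $w\in\Tang{\bar x}{\Equi}\backslash\{\nullv\}$ lies in $\Lambda$ and the separation estimate applies, giving $\langle v,w\rangle\ge\rho\|w\|>0$. To convert this linear lower bound into a lower bound on $\Dlder{\vartheta}{\bar x}{w}$, I would invoke the representation \eqref{eq:FsubdDHder} of the regular subdifferential: since $\vartheta$ is locally Lipschitz (hence l.s.c.) near $\bar x$ and $v\in\Fsubd\vartheta(\bar x)$, one has $\langle v,w\rangle\le\DHlder{\vartheta}{\bar x}{w}$, and in the Lipschitz case the Dini--Hadamard lower derivative coincides with the Dini lower derivative $\Dlder{\vartheta}{\bar x}{w}$. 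Chaining these gives $\Dlder{\vartheta}{\bar x}{w}\ge\langle v,w\rangle\ge\rho\|w\|>0$, which is precisely \eqref{in:socDlolem}.

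\textbf{The main obstacle} I anticipate is the justification that $-v\in\inte\dcone{\Lambda}$ yields the uniform bound $\langle v,w\rangle\ge\rho\|w\|$ on all of $\Lambda$, rather than merely $\langle v,w\rangle>0$ pointwise; one must be careful that $\Lambda$ is a cone so that a ball around $-v$ inside the polar cone propagates to a linear-in-$\|w\|$ estimate homogeneously over the whole cone. A secondary point requiring care is the identification $\DHlder{\vartheta}{\bar x}{w}=\Dlder{\vartheta}{\bar x}{w}$ under local Lipschitzness, which is standard but should be cited to the same sources underlying \eqref{eq:FsubdDHder}. Everything else is a routine chaining of inequalities.
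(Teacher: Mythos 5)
Your proposal is correct and follows essentially the same route as the paper's own proof: extract $v\in\Fsubd\vartheta(\bar x)$ with $-v$ in the interior of the polar cone, use the ball $\ball{-v}{\rho}\subseteq\dcone{\Lambda}$ to get the strict inequality $\langle v,w\rangle\ge\rho\|w\|>0$ on $\Lambda\backslash\{\nullv\}$, pass to $\Dlder{\vartheta}{\bar x}{w}=\DHlder{\vartheta}{\bar x}{w}>0$ via $(\ref{eq:FsubdDHder})$, and conclude by combining Theorem \ref{thm:outapprox} with Lemma \ref{lem:SOCMPVEC}. The two points you flag as potential obstacles are handled in the paper exactly as you suggest, so no gap remains.
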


\begin{proof}
Observe first that if for a given cone $S\subseteq\R^n$ it is
$v\in\inte(\dcone{S})$, then it must be
$$
  \langle v,s\rangle<0,\quad\forall s\in S\backslash\{\nullv\}.
$$
Indeed, there exists $\delta>0$ such that $v+\delta\Uball\subseteq
\dcone{S}$, and therefore it holds
$$
  \langle v+\delta u,s\rangle\le 0,\quad\forall u\in\Uball,
  \ \forall s\in S.
$$
Thus, for any $s\in S\backslash\{\nullv\}$, the last inequality
implies
$$
  \sup_{u\in\Uball}\langle v+\delta u,s\rangle=
  \langle v,s\rangle+\delta\sup_{u\in\Uball}\langle u,s\rangle=
  \langle v,s\rangle+\delta\|s\|\le 0,
$$
whence one gets
$$
  \langle v,s\rangle\le -\delta\|s\|<0.
$$
Consequently, the condition $(\ref{in:socDlo})$ implies that there exists
$v\in\Fsubd\vartheta(\bar x)$ such that
it is
$$
  \langle v,w\rangle>0,\quad\forall w\in\left[\bigcap_{z\in K}
  \Bder{f}{\bar x,z}^{-1}(\Tang{f(\bar x,z)}{C})\cap\Tang{\bar x}{K}\right]
  \backslash\{\nullv\}.
$$
By recalling the representation of $\Fsubd\vartheta(\bar x)$ in
$(\ref{eq:FsubdDHder})$, from the last inequality one obtains
$$
  \Dlder{\vartheta}{\bar x}{w}=\DHlder{\vartheta}{\bar x}{w}>0,
  \quad\forall w\in\left[\bigcap_{z\in K}
  \Bder{f}{\bar x,z}^{-1}(\Tang{f(\bar x,z)}{C})\cap\Tang{\bar x}{K}\right]
  \backslash\{\nullv\}.
$$
Since under the above assumptions Theorem \ref{thm:outapprox}
can be applied, then by virtue of the inclusion in $(\ref{in:outapproxTangEqui})$
one can state that condition $(\ref{in:socDlolem})$ turns out to be
satisfied. Thus, the thesis of the theorem follows from Lemma
\ref{lem:SOCMPVEC}.
\end{proof}

\begin{remark}
(i) As it is possible to see by elementary examples (see \cite[Chapter 1]{Mord18}),
$\Fsubd\vartheta(\bar x)$ may happen to be empty even though $\vartheta$
is locally Lipschitz around $\bar x$. In these circumstances,
the condition in $(\ref{in:socDlo})$ can never be satisfied.
On the other hand, whenever the p.h. function $\DHlder{\vartheta}{\bar x}{\cdot}:\R^n\longrightarrow\R$
is sublinear (and hence continuous), then $\Fsubd\vartheta(\bar x)=\partial
\DHlder{\vartheta}{\bar x}{\cdot}(\nullv)\ne\varnothing$.
This happens e.g. (but not only) when $\vartheta:\R^n\longrightarrow\R$ is convex, in which
case one has $\Fsubd\vartheta(\bar x)=\partial\vartheta(\bar x)$.

(ii) The local Lipschitz continuity of $\vartheta$ near $\bar x$ might lead
to believe that the Clarke subdifferential may come into play in the current
context. Recall that the latter is defined by
$$
  \Csubd\vartheta(\bar x)=\left\{v\in\R^n:\ \langle v,w\rangle\le
  \limsup_{x\to\bar x\atop t\downarrow 0}{\vartheta(x+tw)-\vartheta(x)\over t},
  \quad\forall w\in\R^n\right\}.
$$
Since, if $\vartheta$ is locally Lipschitz around $\bar x$, then  it is
$\Fsubd\vartheta(\bar x)\subseteq\Csubd\vartheta(\bar x)$ (see, for instance,
\cite[Chapter 1]{Mord18}), it follows that the condition
\begin{equation}    \label{in:socDClarke}
  \nullv\in\Csubd\vartheta(\bar x)+\inte\left[\dcone{\left(\bigcap_{z\in K}
  \Bder{f}{\bar x,z}^{-1}(\Tang{f(\bar x,z)}{C})\cap\Tang{\bar x}{K}\right)}
  \right]
\end{equation}
does not imply in general the condition in $(\ref{in:socDlo})$.
\end{remark}

\vskip1cm



\begin{thebibliography}{99}

\bibitem{AnKoYa02} Q. H. Ansari, I.V. Konnov, J.C. Yao,
{\it  Characterizations of solutions for vector equilibrium problems},
J. Optim. Theory Appl. \textbf{113} (2002), no. 3, 435--447.

\bibitem{AnKoYa18} Q. H. Ansari, E. K\"obis, J.-C. Yao,
{\it Vector variational inequalities and vector optimization. Theory
and applications. Vector Optimization}, Springer, Cham, 2018.

\bibitem{AnOeSc97} Q. H. Ansari, W. Oettli, D. Schl\"ager,
{\it A generalization of vectorial equilibria}, Math. Methods Oper. Res.
\textbf{46} (1997), no. 2, 147--152.

\bibitem{AubFra90} J.-P. Aubin, H. Frankowska, {\it Set-valued analysis},
Birkh\"auser Boston, Boston, MA, 2009.

\bibitem{BiHaSc97} M. Bianchi, N. Hadjisavvas, and S. Schaible,
{\it Vector equilibrium problems with generalized monotone bifunctions},
J. Optim. Theory Appl. \textbf{92} (1997), no. 3, 527--542.

\bibitem{CasPap95} M. Castellani, M. Pappalardo,
{\it First-order cone approximations and necessary optimality conditions},
Optimization \textbf{35} (1995), no. 2, 113--126.

\bibitem{DemRub95} V.F. Demyanov, A.M. Rubinov,
{\it Constructive nonsmooth analysis}, Peter Lang, Frankfurt am Main, 1995.

\bibitem{ElsThi88} K.-H. Elster, J. Thierfelder,
{\it Abstract cone approximations and generalized differentiability in nonsmooth
optimization}, Optimization \textbf{19} (1988), no. 3, 315--341.

\bibitem{Gong06} X.H. Gong,
{\it Strong vector equilibrium problems}, J. Global Optim. \textbf{36}
(2006), no. 3, 339--349.

\bibitem{GoKiYa08} X.H. Gong, K. Kimura, J.-C. Yao,
{\it Sensitivity analysis of strong vector equilibrium problems},
J. Nonlinear Convex Anal. \textbf{9} (2008), no. 1, 83--94.

\bibitem{LuPaRa96} Z.-Q. Luo, J.-S. Pang, D. Ralph,
{\it Mathematical programs with equilibrium constraints}, Cambridge
University Press, Cambridge, 1996.

\bibitem{LuPaRaWu96} Z.-Q. Luo, J.-S. Pang, D. Ralph, S.-Q. Wu,
{\it Exact penalization and stationarity conditions of mathematical
programs with equilibrium constraints}, Math. Programming \textbf{75}
(1996), no. 1, Ser. A, 19--76.

\bibitem{Mord06a} B.S. Mordukhovich,
{\it Variational analysis and generalized differentiation.
I. Basic theory}, Springer-Verlag, Berlin, 2006.

\bibitem{Mord06b} B.S. Mordukhovich,
{\it Variational analysis and generalized differentiation.
II. Applications}, Springer-Verlag, Berlin, 2006.

\bibitem{Mord18}  B.S. Mordukhovich,
{\it Variational analysis and applications}, Springer, Cham, 2018.

\bibitem{Oett97} W. Oettli,
{\it A remark on vector-valued equilibria and generalized monotonicity}, Acta
Math. Vietnam. \textbf{22} (1997), no. 1, 213--221.

\bibitem{OuKoZo98} J.V. Outrata, M. Ko\v{c}vara, J. Zowe,
{\it Nonsmooth approach to optimization problems with equilibrium constraints.
Theory, applications and numerical results}, Nonconvex Optimization and
its Applications, 28. Kluwer Academic Publishers, Dordrecht, 1998.

\bibitem{Peno13} J.P. Penot,
{\it Calculus without derivatives}, Springer, New York, 2013.

\bibitem{RocWet98} R.T. Rockafellar and R.J.-B. Wets,
{\it Variational Analysis}, Springer-Verlag, Berlin, 1998.

\bibitem{Schi07} W. Schirotzek,
{\it Nonsmooth analysis}, Springer, Berlin, 2007.


\bibitem{Uder22} A. Uderzo,
{\it Some enhanced existence results for strong vector equlibrium
problems}, to appear on Pure and Applied Functional Analysis.

\bibitem{Ye05} J.J. Ye,
{\it Necessary and sufficient optimality conditions for mathematical
programs with equilibrium constraints}, J. Math. Anal. Appl.
\textbf{307} (2005), no. 1, 350--369.


\end{thebibliography}
\end{document}